\newcommand{\textcyr}[1]{%
 {\fontencoding{OT2}\fontfamily{wncyr}\fontseries{m}\fontshape{n}\selectfont #1}}
\newcommand{\Sha}{{\mbox{\textcyr{Sh}}}}
\newcommand{\Z}{{\mathbb Z}}
\newcommand{\Q}{{\mathbb Q}}
\newcommand{\F}{{\mathbb F}}
\newcommand{\PP}{{\mathbb P}}
\newcommand{\kbar}{{\overline{k}}}
\newcommand{\Lbar}{{\overline{L}}}
\newcommand{\Mbar}{{\overline{M}}}
\newcommand{\calH}{{\mathcal H}}
\newcommand{\calX}{{\mathcal X}}
\newcommand{\calY}{{\mathcal Y}}
\newcommand{\fm}{{\mathfrak m}}
\newcommand{\To}{\longrightarrow}
\DeclareMathOperator{\Map}{Map}
\DeclareMathOperator{\End}{End}
\DeclareMathOperator{\Hom}{Hom}
\DeclareMathOperator{\Aut}{Aut}
\DeclareMathOperator{\Br}{Br}
\DeclareMathOperator{\Sel}{Sel}
\DeclareMathOperator{\Div}{Div}
\DeclareMathOperator{\Princ}{Princ}
\DeclareMathOperator{\Pic}{Pic}
\DeclareMathOperator{\PIC}{\bf Pic}
\DeclareMathOperator{\Spec}{Spec}
\DeclareMathOperator{\HH}{H}
\DeclareMathOperator{\diw}{div}
\newcommand{\gal}[1]{\mathfrak{g}_{#1}}
\newcommand{\cdell}{M^\times/\iota(k^\times)\partial(L^\times)}
\newcommand{\res}{\operatorname{res}}
\newcommand{\Cov}{\operatorname{Cov}}
\newcommand{\pr}{\operatorname{pr}}
\newtheorem{Theorem}{Theorem}[section]
\newtheorem{Lemma}[Theorem]{Lemma}
\newtheorem{Proposition}[Theorem]{Proposition}
\newtheorem{Corollary}[Theorem]{Corollary}
\theoremstyle{definition}
\newtheorem{Definition}[Theorem]{Definition}
\newtheorem{Example}[Theorem]{Example}
\newtheorem{Remark}[Theorem]{Remark}
\numberwithin{equation}{section}
\begin{document}

\title[Descent on $\PIC^1(X)$]{Explicit descent in the Picard group of a cyclic cover of the projective line}

\author{Brendan Creutz}
\address{School of Mathematics and Statistics, University of Sydney, NSW 2006, Australia}
\email{brendan.creutz@sydney.edu.au}
\date{10 May 2012}

\begin{abstract}
Given a curve $X$ of the form $y^p = h(x)$ over a number field, one can use descents to obtain explicit bounds on the Mordell--Weil rank of the Jacobian or to prove that the curve has no rational points.  We show how, having performed such a descent, one can easily obtain additional information which may rule out the existence of rational divisors  on $X$ of degree prime to $p$. This can yield sharper bounds on the Mordell--Weil rank by demonstrating the existence of nontrivial elements in the Shafarevich--Tate group. As an example we compute the Mordell--Weil rank of the Jacobian of a genus $4$ curve over $\Q$ by determining that the $3$-primary part of the Shafarevich--Tate group is isomorphic to $\Z/3\times\Z/3$.
\end{abstract}

\maketitle

\section{Introduction}
Let $k$ be a global field and $J/k$ an abelian variety. Any separable isogeny $\varphi:J \to J$ gives rise to a short exact sequence of finite abelian groups, \[ 0 \to J(k)/\varphi(J(k)) \to \Sel^\varphi(J/k) \to \Sha(J/k)[\varphi] \to 0\,, \] relating the finitely generated Mordell--Weil group $J(k)$ and the conjecturally finite Shafarevich--Tate group $\Sha(J/k)$. Computation of the middle term, the {\em $\varphi$-Selmer group of $J$}, is typically referred to as a {\em $\varphi$-descent on $J$}. This produces an explicit upper bound for the Mordell--Weil rank which will only be sharp when $\Sha(J/k)[\varphi]$ is trivial.

While descents on elliptic curves have a history stretching back as far as Fermat, the first examples for abelian varieties of higher dimension appear to have been computed in the 1990s by Gordon and Grant \cite{GordonGrant}, though Cassels had suggested a method using his so called $(x-T)$ map a decade earlier \cite{Casselsg2}. These first examples concerned Jacobians of genus $2$ curves with rational Weierstrass points. Schaefer \cite{Schaefer2d,SchJAC} and Poonen--Schaefer \cite{PoonenSchaefer} later developed a cohomological interpretation of Cassels' $(x-T)$ map which allowed them to generalize the method to Jacobians of all cyclic covers of the projective line. More recently Bruin--Stoll \cite{BruinStoll} and Mourao \cite{Mourao} have used a similar $(x-T)$ map to do a descent on the cyclic cover itself. This computes a finite set of everywhere locally solvable coverings of the curve which may be of use in determining its set of rational points. In particular, when this set is empty there are no rational points on the curve.

We show how, having performed a descent on the Jacobian $J$ of a cyclic cover $X$, one can easily obtain additional information which may rule out the existence of $k$-rational divisors of degree $1$ on $X$. When $X$ is everywhere locally solvable (for instance) the scheme $\PIC^1(X)$, whose $k$-rational points parametrize $k$-rational divisor classes of degree $1$ on $X$, represents an element of $\Sha(J/k)$. So this can be used to show that $\Sha(J/k)$ is nontrivial, and consequently to deduce sharper bounds for the Mordell--Weil rank. We show that this new information can be interpreted as a set parametrizing certain everywhere locally solvable coverings of $\PIC^1(X)$, so one might refer to the method as a {\em descent on $\PIC^1(X)$}. This interpretation allows us to  relate the set in question to the divisibility properties of $\PIC^1(X)$ in $\Sha(J/k)$ (see Theorem \ref{mainthm} and Corollary \ref{Cortomainthm}). Well known properties of the Cassels--Tate pairing then allow us to deduce a better lower bound for the size of $\Sha(J/k)$ (unconditionally). We give several examples. In one we compute the Mordell--Weil rank of the Jacobian of a genus $4$ curve over $\Q$ by determining that the $3$-primary part of the Shafarevich--Tate group is isomorphic to $\Z/3\times\Z/3$. We also present empirical data suggesting better bounds are thus obtained rather frequently for hyperelliptic curves.

While one gets additional information on $k$-rational divisors of degree $1$, this is unlikely to be of much additional use for determining the set of rational points on $X$ when the genus is at least $2$.  When $X(k) \ne \emptyset$, the descent on $\PIC^1(X)$ yields no new information on the Mordell--Weil rank since $\PIC^1(X) \simeq J$. The obstruction to the existence of rational points on $X$ provided by the descent on $\PIC^1(X)$ is weaker than that given by the descent on $X$, and only provides any new information when the descent on $X$ actually gives an obstruction. That being said, descents on $\PIC^1(X)$ could be useful for computing large generators of the Mordell--Weil group or for finding a $k$-rational embedding of $X$ into the Jacobian (see \cite[Section 3.2]{BS2} for some examples with genus $2$ curves), both of which are relevant for tools such as the Mordell--Weil Sieve or Chabauty's Method. Although, such benefits can only be reaped by constructing explicit models for the coverings parametrized by the descent, which is a topic which we will not address here.

\subsection{Notation}
Throughout the paper $p$ will be a prime number and $k$ a field of characteristic different from $p$ containing the $p$-th roots of unity. We use $\kbar$ to denote a separable closure of $k$ and $\gal{k}$ to denote the absolute Galois group of $k$. When $k$ is a global field we denote its completion at a prime $v$ by $k_v$.

If $G$ is a group, a {\em principal homogeneous space for $G$} is a set $H$ on which $G$ acts simply transitively. We make the convention that $\emptyset$ is a principal homogeneous space for any group. Suppose $H$ and $H'$ are principal homogeneous spaces for groups $G$ and $G'$, respectively, and that $i_0:G \to G'$ is a homomorphism of groups. Then a map $i:H \to H'$ is a map is said to be affine (with respect to $i_0$) if $i(g\cdot h) = i_0(g)\cdot i(h)$ for all $h \in H$ and $g \in G$. An {\em affine isomorphism} is an affine bijection with respect to an isomorphism of groups. When $G$ is an abelian group and $n$ is an integer, we use $G[n]$ and $G(n)$ to denote the $n$-torsion subgroup and the subgroup of elements killed by some power of $n$, respectively.

If $L$ is a $k$-algebra we use $\bar{L}$ to denote $L \otimes_k \kbar$. If $V$ is a projective variety over $k$ and $L$ is a commutative $k$-algebra we use $V_{L}$ or $V \otimes_k L$ to denote the extension of scalars, $V \times_{\Spec(k)} \Spec(L)$. The group of $k$-rational divisors on $V$ is denoted $\Div(V)$. The function field of $V$ is denoted $\kappa(V)$. A divisor is called {\em principal} if it is the divisor of a function $f \in \kappa(V)$; the group of all such divisors is denoted $\Princ(V)$. The quotient of $\Div(V)$ by $\Princ(V)$ is denoted $\Pic(V)$. When $V$ is a curve $\Div(V)$ is the free abelian group on the set of closed points of $V$, and there is a well defined notion of degree in $\Div(V)$. For a point $P \in V(\kbar)$ we use $[P]$ to denote the corresponding element in $\Div(V_{\kbar})$. The degree of a principal divisor is $0$, so there is also a well defined notion of degree for classes in $\Pic(V)$. We denote the subset consisting of classes of degree $i$ by $\Pic^i(V)$.

Let $A$ be an abelian variety defined over $k$. A {\em $k$-torsor under $A$} is a variety $T$ over $k$, together with an algebraic group action of $A$ on $T$ defined over $k$ such that the induced map $A\times T \ni (a,t) \mapsto (a+t,t) \in T \times T$ is an isomorphism. This means that geometrically $A$ acts simply transitively on $T$. The $k$-isomorphism classes of $k$-torsors under $A$ are parameterized by the torsion abelian group $\HH^1(k,A)$. The trivial class is represented by $A$ acting on itself by translations, and a $k$-torsor under $A$ is trivial if and only if it possess a $k$-rational point. Thus when $k$ is a global field with completions $k_v$ the Shafarevich--Tate group, $\Sha(A/k) := \ker\left( \HH^1(k,A) \to \bigoplus \HH^1(k_v,A) \right)$, parameterizes isomorphism classes of everywhere locally solvable torsors.

We often refer to a variety as a $k$-torsor under $A$, taking the group action to be implicit. If $T$ is a $k$-torsor under $A$,  then any point $t_0 \in T$ gives rise to an isomorphism $T \simeq A$ defined over $k(t_0)$ sending a point $t \in T$ to the unique $a \in A$ such that $a+t_0 = t$. We say an isomorphism $\psi:T \simeq A$ is {\em compatible with the  torsor structure on $T$} if it is of this type. The action of $A$ on $T$ can be recovered from such an isomorphism by the rule: $a + t = \psi^{-1}(\psi(t)+a)$.

\section{Coverings and divisibility in $\Sha$}
\label{CoveringInterpretation}

\begin{Definition}
\label{DefineCoverings}
Let $\varphi : A' \to A$ be a separable isogeny of abelian varieties. Let $T$ be a $k$-torsor under $A$ and fix a $\kbar$-isomorphism $\psi_T:T\to A$ compatible with the torsor structure. A {\em $\varphi$-covering of $T$} is a $k$-variety $S$ together with a morphism $S \stackrel{\pi}{\to} T$ defined over $k$ such that there exists a $\kbar$-isomorphism $\psi_S:S \to A'$ such that $\varphi\circ\psi_S = \psi_T\circ\pi$. Two $\varphi$-coverings of $T$ are $k$-isomorphic if they are $k$-isomorphic as $T$-schemes. We use $\Cov^\varphi(T/k)$ to denote the set of isomorphism classes of $\varphi$-coverings of $T$. If $k$ is a global field we define the {\em $\varphi$-Selmer set of $T$} to be the subset $\Sel^\varphi(T/k) \subset \Cov^\varphi(T/k)$ consisting of those $\varphi$-coverings which are everywhere locally solvable.
\end{Definition}

We will see below that this definition generalizes the usual definition of the $\varphi$-Selmer group of an abelian variety. The definition does not depend on the choice for $\psi_T$, and the isomorphism $\psi_S$ endows $S$ with the structure of a $k$-torsor under $A'$.

\begin{Lemma}
Let $(S,\pi)$ be a $\varphi$-covering of $T$. Then its group of $\kbar$-automorphisms is isomorphic to $A'[\varphi]$ as a Galois module.
\end{Lemma}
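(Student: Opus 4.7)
My plan is to transport the question to $A'_{\kbar}$ via the $\kbar$-isomorphism $\psi_S$, identify automorphisms of $A'_{\kbar}$ over $A_{\kbar}$ (through $\varphi$) with translations by $\kbar$-points of $A'[\varphi]$, and then verify that the resulting bijection is $\gal{k}$-equivariant despite the fact that $\psi_S$ need not be defined over $k$.

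For the first step, given a $\kbar$-automorphism $\sigma$ of $S$ satisfying $\pi\circ\sigma=\pi$, I would set $\tilde\sigma := \psi_S\circ\sigma\circ\psi_S^{-1}\colon A'_{\kbar}\to A'_{\kbar}$. Since $\psi_T$ is an isomorphism, the defining relation $\varphi\circ\psi_S = \psi_T\circ\pi$ shows that $\pi\circ\sigma=\pi$ is equivalent to $\varphi\circ\tilde\sigma=\varphi$. For any such $\tilde\sigma$, the morphism $a\mapsto\tilde\sigma(a)-a$ factors through $\ker(\varphi)=A'[\varphi]$, which is a finite étale subscheme of $A'$ because $\varphi$ is separable. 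Since $A'_{\kbar}$ is connected, this morphism must be constant with some value $c(\sigma)\in A'[\varphi](\kbar)$, so $\tilde\sigma$ is translation by $c(\sigma)$. Composition of automorphisms corresponds to addition of translation elements, so $\sigma\mapsto c(\sigma)$ is a group isomorphism onto $A'[\varphi](\kbar)$.

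The main subtlety is Galois equivariance. For $g\in\gal{k}$, I introduce $\alpha_g := \psi_S\circ({}^g\psi_S)^{-1}$, a $\kbar$-automorphism of $A'_{\kbar}$. Since $\psi_S$ is compatible with the $k$-torsor structure it endows on $S$, so is ${}^g\psi_S$; hence $\alpha_g$ is an automorphism of $A'_{\kbar}$ as a $\kbar$-torsor under $A'$, i.e., translation by some $a'_g\in A'(\kbar)$. A direct calculation gives $\widetilde{{}^g\sigma} = \alpha_g\circ{}^g\tilde\sigma\circ\alpha_g^{-1}$. Because translations in the abelian group $A'$ commute with one another, this conjugate simplifies to ${}^g\tilde\sigma$, which is translation by $g\cdot c(\sigma)$. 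Therefore $c({}^g\sigma)=g\cdot c(\sigma)$, establishing the desired isomorphism of Galois modules.
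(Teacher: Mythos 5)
Your argument is correct and follows essentially the same route as the paper: conjugate by $\psi_S$, observe that $\tilde\sigma - \mathrm{id}$ lands in the discrete (finite \'etale) kernel $A'[\varphi]$ and is therefore constant by connectedness of $A'_{\kbar}$, so every automorphism over $T$ is translation by a $\varphi$-torsion point. You additionally verify the Galois-equivariance of $\sigma\mapsto c(\sigma)$ explicitly via the translations $\alpha_g=\psi_S\circ({}^g\psi_S)^{-1}$, a step the paper's proof leaves implicit.
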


\begin{proof}
Suppose $\psi:S \to S$ is an isomorphism such that $\pi = \pi\circ \psi$ and consider the endomorphism $\tau = \psi_S\circ\psi\circ\psi_S^{-1} - 1 \in \End(A')$. Since $\pi = \varphi \circ \psi_S = \varphi\circ \psi_S \circ \psi$ we have that $\varphi\circ \tau$ is identically $0$. Then $\tau$ is a continuous map from $A'(\kbar)$, which is irreducible, to $A'[\varphi]$, which is discrete. Hence $\tau$ is constant. It follows that $\psi$ is translation by a $\varphi$-torsion point. Conversely it is clear that translation by any $\varphi$-torsion point gives an automorphism of $(S,\pi)$.
\end{proof}

By definition all $\varphi$-coverings of $T$ are twists of one another. So by the twisting principle $\Cov^\varphi(T/k)$ is a principal homogeneous space for the group $\HH^1(k,A'[\varphi])$. In the special case $T = A$ (acting on itself by translations), the morphism $\varphi:A' \to A$ gives $A'$ a canonical structure as a $\varphi$-covering of $A$. This gives a canonical identification of $\HH^1(k,A'[\varphi])$ and $\Cov^\varphi(A/k)$ and consequently endows $\Cov^\varphi(A/k)$ with a group structure in which $\varphi:A'\to A$ represents the identity. Under this identification the isomorphism classes of $\varphi$-coverings of $A$ which possess $k$-rational points correspond to the kernel in the Kummer sequence,
\begin{align}
0 \to A(k)/\varphi(A'(k)) \to \HH^1(k,A'[\varphi]) \to \HH^1(k,A')[\varphi] \to 0\,.
\end{align}
When $k$ is a global field one can deduce from this that $\Sel^\varphi(A/k)$ is identified with the kernel of the natural map $\HH^1(k,A'[\varphi]) \to \bigoplus_{v} \HH^1(k_v,A)$. In particular it is a subgroup and it sits in an exact sequence
\begin{align}
\label{elsKummer}
0 \to A(k)/\varphi(A'(k)) \to \Sel^\varphi(A/k) \to \Sha(A'/k)[\varphi] \to 0\,.
\end{align}

\begin{Remark}
The reader is cautioned that our notation is nonstandard. Our $\Sel^\varphi(A/k)$ would typically be referred to as the $\varphi$-Selmer group of $A'$ (with $A'$ present in the notation). 
\end{Remark}

More generally $\varphi$-Selmer sets are related to divisibility in the Shafarevich--Tate group as follows.

\begin{Proposition}
\label{ShaDiv}
Suppose $\varphi : A ' \to A$ is a separable isogeny of abelian varieties over $k$ and that $T$ is a $k$-torsor under $A$. Then $\Cov^\varphi(T/k) \ne \emptyset$ if and only if $T \in \varphi\HH^1(k,A')$. If $k$ is a global field, then  $\Sel^\varphi(T/k) \ne \emptyset$ if and only if $T \in \varphi\Sha(A'/k)$.
\end{Proposition}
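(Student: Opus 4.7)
The strategy is to translate between the geometric definition of a $\varphi$-covering and the cohomological image of the pushforward $\varphi_* : \HH^1(k,A') \to \HH^1(k,A)$.

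For the forward direction of the first statement, suppose $(S,\pi) \in \Cov^\varphi(T/k)$. The $\kbar$-isomorphism $\psi_S$ endows $S$ with the structure of a $k$-torsor under $A'$, and the compatibility $\varphi\circ\psi_S = \psi_T\circ\pi$ translates into the equivariance $\pi(a' + s) = \varphi(a') + \pi(s)$ for $a' \in A'(\kbar)$, $s \in S(\kbar)$. Choose a basepoint $s_0 \in S(\kbar)$ and set $t_0 = \pi(s_0)$; then the classes of $S$ and $T$ are represented by the cocycles $c^S_\sigma = \sigma(s_0) - s_0 \in A'(\kbar)$ and $c^T_\sigma = \sigma(t_0) - t_0 \in A(\kbar)$, with subtraction taken in the respective torsor senses. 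Since $\pi$ is defined over $k$,
\[
c^T_\sigma + t_0 = \sigma(t_0) = \sigma(\pi(s_0)) = \pi(\sigma(s_0)) = \pi(c^S_\sigma + s_0) = \varphi(c^S_\sigma) + t_0,
\]
so $c^T_\sigma = \varphi(c^S_\sigma)$ and hence $[T] = \varphi_*[S] \in \varphi\HH^1(k,A')$.

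For the reverse direction, suppose $[T] = \varphi_*[S]$ for some $k$-torsor $S$ under $A'$. After suitably choosing basepoints $s_0 \in S(\kbar)$ and $t_0 \in T(\kbar)$ (equivalently, adjusting $\psi_T$ by a translation killing the coboundary witnessing $[T] = \varphi_*[S]$), we may assume the representative cocycles satisfy $c^T_\sigma = \varphi(c^S_\sigma)$ on the nose. Define $\pi := \psi_T^{-1}\circ\varphi\circ\psi_S$ as a $\kbar$-morphism $S \to T$; then $(S,\pi)$ is a $\varphi$-covering by construction, and all that remains is to check that $\pi$ descends to $k$. Using that $\sigma(\psi_S) = \tau_{-c^S_\sigma}\circ\psi_S$ and $\sigma(\psi_T) = \tau_{-c^T_\sigma}\circ\psi_T$ (where $\tau_a$ denotes translation by $a$), together with the intertwining $\varphi\circ\tau_{a'} = \tau_{\varphi(a')}\circ\varphi$ and the $k$-rationality of $\varphi$, a short calculation gives $\sigma(\pi) = \tau_{c^T_\sigma - \varphi(c^S_\sigma)}\circ\pi = \pi$.

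The second statement follows easily from the first: if $(S,\pi) \in \Sel^\varphi(T/k)$ then $S$ is everywhere locally solvable, so $[S] \in \Sha(A'/k)$ and $[T] \in \varphi\Sha(A'/k)$; conversely, if $[T] = \varphi_*[S]$ with $[S] \in \Sha(A'/k)$, then the covering $(S,\pi)$ produced above is automatically everywhere locally solvable, since $S$ is. The only technical point in the argument is the cocycle bookkeeping in the reverse direction, but it amounts to little more than carefully unwinding the Galois actions on the trivializations $\psi_S$ and $\psi_T$ and tracking signs through $\varphi$; everything else is a formal translation between equivalent descriptions of torsors.
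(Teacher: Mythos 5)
Your proof is correct, but it takes a genuinely different route from the paper's. You work directly at the level of cocycles attached to trivializations $\psi_S$, $\psi_T$: the forward direction reads off $c^T_\sigma = \varphi(c^S_\sigma)$ from $k$-rationality of $\pi$, and the reverse direction reconstructs $\pi = \psi_T^{-1}\circ\varphi\circ\psi_S$ and verifies Galois descent by hand. In effect you establish the stronger statement that the map $\Cov^\varphi(T/k) \ni (S,\pi) \mapsto [S] \in \HH^1(k,A')$ identifies the set of coverings (up to isomorphism) with the fiber $\varphi_*^{-1}([T])$, after which both claims of the proposition, including the Selmer version, are immediate because ``everywhere locally solvable'' for the covering is by definition the condition $[S] \in \Sha(A'/k)$. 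The paper instead stays entirely within the covering formalism it has already set up: it lifts $T$ to an $m$-covering $(T,\pi) \in \Sel^m(A/k)$ (using surjectivity of $\Sel^m(A/k) \to \Sha(A/k)[m]$ from the sequence \eqref{elsKummer}), composes $\varphi$-coverings of $T$ with $\pi$ to land in $\Sel^{m\circ\varphi}(A/k)$, and finishes with a diagram chase; this avoids explicit cocycle manipulation but leans on the already-established exact sequences for Selmer groups of $A$ itself. Your argument is more elementary and self-contained; the paper's is shorter given its preceding setup and makes the relation to the group structure on $\Sel^{m\circ\varphi}(A/k)$ transparent. The only points to be careful about in your write-up are routine: the adjustment of basepoints so that $c^T_\sigma = \varphi(c^S_\sigma)$ holds on the nose (legitimate, since moving $t_0$ changes $c^T$ exactly by a coboundary), and the sign convention for torsor cocycles, which differs from the one used in Section \ref{CyclicCovers} of the paper but is immaterial as long as it is applied consistently to both $S$ and $T$.
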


\begin{proof} We will prove the second statement. The first can be proved using the same argument. We may assume $T \in \Sha(A/k)$, otherwise the statement is trivial. Suppose $T$ is killed by $m$ and consider the following commutative and exact diagram: \[ \xymatrix{ \Sel^{m\circ\varphi}(A/k) \ar[d]^{\varphi_*} \ar[r] & \Sha(A'/k)[m\circ\varphi] \ar[r]\ar[d]^\varphi& 0\\ \Sel^{m}(A/k) \ar[r] & \Sha(A/k)[m] \ar[r]& 0 }\] The torsor $T$ admits a lift to an $m$-covering $T \stackrel{\pi}\to A$ in the $m$-Selmer group of $A$. Each choice of lift gives a map $\Sel^\varphi(T/k) \ni (S,\rho) \mapsto (S,\pi\circ\rho) \in \Sel^{(m\circ\varphi)}(A/k)$. The image of this map is exactly the fiber above $(T,\pi)$ under the map denoted $\varphi_*$ in the diagram above. From this one deduces the result from commutativity and the fact that the horizontal maps are surjective.
\end{proof}

We record here the following well known lemma which relates the condition in Proposition \ref{ShaDiv} to the Cassels--Tate pairing.

\begin{Lemma}
\label{CTShaDiv}
Let $\varphi:A'\to A$ be a separable isogeny of abelian varieties over a global field $k$ with dual isogeny $\varphi^\vee:A^\vee \to A'^\vee$. An element of $\Sha(A/k)$ is divisible by $\varphi$ if and only if it pairs trivially with every element of $\Sha(A^\vee/k)[\varphi^\vee]$ under the Cassels--Tate pairing.
\end{Lemma}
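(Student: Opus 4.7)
The plan is to deduce the lemma from two standard properties of the Cassels--Tate pairing: its functoriality with respect to isogenies and a nondegeneracy statement in the form of an exact sequence. Once those are in hand, both directions are nearly automatic.

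The ``only if'' direction is immediate. Suppose $T = \varphi_* T'$ for some $T' \in \Sha(A'/k)$. Then for any $y \in \Sha(A^\vee/k)[\varphi^\vee]$, the standard adjunction $\langle \varphi_* x, y \rangle_A = \langle x, \varphi^\vee_* y \rangle_{A'}$ for the Cassels--Tate pairing gives
$$\langle T, y \rangle_A = \langle T', \varphi^\vee_* y \rangle_{A'} = \langle T', 0 \rangle_{A'} = 0.$$

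For the ``if'' direction I would invoke the well-known exact sequence
$$\Sha(A'/k) \xrightarrow{\varphi_*} \Sha(A/k) \xrightarrow{\delta} \Hom\bigl(\Sha(A^\vee/k)[\varphi^\vee],\, \Q/\Z\bigr),$$
in which $\delta(T)$ is the homomorphism $y \mapsto \langle T, y \rangle_A$. This is a refinement of the Cassels--Tate duality theorem; it is proved, for example, in Milne's \emph{Arithmetic Duality Theorems} (Theorem~I.6.26 together with the discussion of functoriality under isogenies), and also features prominently in the work of Poonen--Stoll on the Cassels--Tate pairing for Jacobians. Granted this, the hypothesis that $T$ pairs trivially with every element of $\Sha(A^\vee/k)[\varphi^\vee]$ is precisely the statement $\delta(T) = 0$, and exactness then yields $T \in \varphi_*\Sha(A'/k)$, which is what ``divisible by $\varphi$'' means.

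The main obstacle is not a calculation but bookkeeping: different sources formulate the nondegeneracy under different finiteness hypotheses on $\Sha$ (some assume $\Sha$ finite; others work with the quotient by its maximal divisible subgroup and form Pontryagin duals of profinite completions), and the sign and isogeny-equivariance conventions for the pairing must be matched to the form just stated. Verifying that one has the correct version of the exactness for the isogeny $\varphi$ at hand is the only substantive point; after that the lemma is a direct consequence.
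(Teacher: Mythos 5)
Your proposal is correct and follows essentially the same route as the paper: both reduce the lemma to the exactness of the sequence $\Sha(A'/k)\xrightarrow{\varphi_*}\Sha(A/k)\to\Hom(\Sha(A^\vee/k)[\varphi^\vee],\Q/\Z)$, with the ``complex'' half coming from isogeny-compatibility of the pairing and the exactness half cited to Milne's \emph{Arithmetic Duality Theorems}. The only (cosmetic) difference is that the paper is explicit that Milne states the exactness only for multiplication by an integer and that the general isogeny case is deduced ``in exactly the same manner,'' a caveat your bookkeeping paragraph essentially acknowledges.
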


\begin{proof}
Compatibility of the Cassels--Tate pairing with isogenies (e.g \cite[I.6.10(a)]{ADT}) shows that it induces a complex \[ \varphi\Sha(A') \to \Sha(A) \to \Hom(\Sha(A^\vee)[\varphi^\vee],\Q/\Z)\,.\] The statement is equivalent to claiming that this is exact. When $\varphi$ is multiplication by an integer this result appears in the paragraph following the proof of \cite[I.6.17]{ADT}. The general statement can be deduced in exactly the same manner.
\end{proof}

\section{Cyclic covers of $\PP^1$}
\label{CyclicCovers}
Let $\pi : X \to \PP^1$ be a cyclic cover of degree $p$ defined over $k$. By the Riemann-Hurwitz formula, $X$ has genus $g = (d-2)(p-1)/2$, where $d$ is the number of branch points of $\pi$. Provided $\PP^1(k)$ has sufficiently many points we can make a change of variables to ensure that $\pi$ is not ramified above $\infty \in \PP^1$. As our present interest lies in infinite fields, there is no harm in assuming this to be the case. The pull back $\mathfrak{m} = \pi^*\infty$ is an effective $k$-rational divisor of degree $p$ on $X$. Let $\Omega \subset X$ denote the set of ramification points of $\pi$. Then for any $\omega \in \Omega$ the divisor $p[\omega]$ is linearly equivalent to $\fm$, and $(2g-2)[\omega]$ is a canonical divisor.

\subsection{The isogeny $\phi$}
Since $k$ contains the $p$-th roots of unity, the group of deck transformations of $\pi$ may be identified with $\mu_p(\kbar)$. The action of $\mu_p(\kbar)$ on $X$ extends linearly to give a Galois equivariant action of the group ring $\Z[\mu_p]$ on $\Div(X_\kbar)$. For any divisor $D$, the element $t = \sum_{\zeta \in \mu_p} \zeta \in \Z[\mu_p]$ sends $D$ to a divisor linearly equivalent to $(\deg D)\fm$. Hence $t$ sends $\Div^0(X_\kbar)$ to $\Princ(X_\kbar)$, so the induced actions of $\Z[\mu_p]$ on $J$ and $\Pic^0(X)$ factor through $\Z[\mu_p]/t$, which is isomorphic to the cyclotomic subring of $k$ generated by $\mu_p$. Fix a generator $\zeta \in \mu_p$ and set $\phi = 1 - \zeta$. Then $\phi:J \to J$ is an isogeny of degree $p^{d-2}$. We note that the ratio of $\phi^{p-1}$ and $p$ is a unit in $\End(J)$.

\subsection{The model $y^p = ch(x)$}
By Kummer theory, $X$ has a (possibly singular) affine model of the form $y^p = ch(x)$, where $c \in k^\times$ and $h(x) \in k[x]$ is a $p$-th-power-free polynomial with leading coefficient $1$. In this model $\pi$ is given by the $x$-coordinate and $\zeta \in \mu_p(\kbar)$ acts via $(x,y) \mapsto (x,\zeta y)$. Our assumption that $\infty$ is not a branch point implies that the branch points are the roots of $h(x)$ and so we may assume $p$ divides the degree of $h(x)$.

\subsection{The torsor $\calX$}
In what follows we consider the (reduced) scheme $\calX = \PIC^1(X)$ classifying linear equivalence classes of divisors of degree $1$ on $X$. This scheme is defined over $k$ and its set of $\kbar$-points is $\calX(\kbar) = \Pic^1(X_{\kbar})$. The obvious injection $\Pic^1(X) \to \Pic^1(X_{\kbar})^{\gal{k}} = \calX(k)$ is not always surjective. The obstruction to a $k$-rational divisor class being represented by a $k$-rational divisor can be interpreted as an element of the Brauer group; one has a well known exact sequence (e.g. \cite[Section 9.1]{BLR})
\begin{align}
\label{thetaX}
0 \to \Pic^1(X) \to \calX(k) \stackrel{\theta_X} \to \Br(k)\,.
\end{align}
The obstruction $\theta_X$ vanishes identically when $\Pic^1(X)$ is nonempty. When $k$ is a global field, the local-global principle for $\Br(k)$ can be used to show that $\Pic^1(X) = \calX(k)$ if $\Pic^1(X_{k_v}) = \emptyset$ for at most one prime. Similarly if $\calX(k_v) = \emptyset$ for at most one prime $v$, then $\Pic^0(X) = \Pic^0(X_{\kbar})^{\gal{k}}$, which is equal to $J(k)$.

There is a $\kbar$-isomorphism $\calX \simeq J\,,$ sending a point $P \in \calX$ corresponding to the divisor class of $D$ to the class of the divisor $D - [\omega_0]$ in $\Pic^0(X_{\kbar}) = J(\kbar)$. This endows $\calX$ with the structure of a $k$-torsor under $J$ (which does not depend on the choice for $\omega_0$). The class of $\calX$ in $\HH^1(k,J)$ is given by the class of the $1$-cocycle sending $\sigma \in \gal{k}$ to the class of $[\omega_0] - [\omega_0^\sigma]$ in $\Pic^0(X_{\kbar}) = J(\kbar)$. As the difference of any two ramification points gives a $\phi$-torsion point on $J$, we see that the class of $\calX$ in $\HH^1(k,J)$ is killed by $\phi$. In particular, this class has order $p$ if and only if $\calX(k) = \emptyset$. This is the case if and only if every $k$-rational divisor class on $X$ has degree divisible by $p$.
\section{The algebraic Selmer set}
\label{AlgebraicSelmerSet1}

\subsection{The $(x-T,y)$ map}
\label{x-ty}
Let $H(x,z)$ be the binary form of degree $n = \deg\left(h(x)\right)$ such that $H(x,1) = h(x)$. Then $X$ is birational to the curve $y^p = cH(x,z)$ in the weighted projective plane $\PP^2(x:y:z)$ with weights $1,n/p,1$. Writing $H(x,z)$ as $H(x,z) = H_1(x,z)^{n_1}\dots H_e(x,z)^{n_e}$ with distinct irreducible factors $H_i(x,z)$, the radical of $H(x,z)$ is $H_{rad}(x,z) = H_1(x,z)\dots H_e(x,z)$. Let $L = \Map_k(\Omega,\kbar) \simeq k[x]/H_{rad}(x,1)$. This is the \'etale $k$-algebra associated to the finite $\gal{k}$-set $\Omega$. It splits as a product $L \simeq K_1\times \dots \times K_e$ of finite extensions of $k$ corresponding to the irreducible factors $h_i(x)$ of $h(x )$. We have a {\em weighted norm map}
\begin{align}
\label{defN}
N:L \simeq K_1\times \dots \times K_e \ni (\alpha_1,\dots,\alpha_e) \mapsto \prod_{i = 1}^eN_{K_i/k}(\alpha_i)^{n_i} \in k\,.
\end{align}
Let
\[\Omega' = \{p[\omega] : \omega \in \Omega\} \cup \left\{ \sum_{\omega\in \Omega} n_\omega[\omega]\right\} \subset \Div(X_\kbar)\,.\] The first set appearing in the union above is isomorphic to $\Omega$ as a $\gal{k}$-set. The divisor $\sum_{\omega\in \Omega} n_\omega[\omega]$ is the zero divisor of the function $y/z^{n/p} \in \kappa(X)^\times$. In particular it is invariant under the action of $\gal{k}$. Thus $\Omega'$ is a disjoint union of $\gal{k}$-sets, and the \'etale $k$-algebra corresponding to $\Omega'$ splits as $\Map_k(\Omega',\kbar) = M \simeq L\times k$. Since the action of $\gal{k}$ on $\Omega'$ is induced from the action on $\Omega$, we have an induced norm map:
\begin{align}
\label{defpartial}
\partial: L = \Map_k(\Omega,\kbar) \ni \alpha \mapsto \left(\omega' = \sum c_\omega[\omega] \mapsto \prod \alpha(\omega)^{c_\omega}\right) \in \Map_k(\Omega',\kbar) = M\,.
\end{align}
Concretely, this is the map \[ L \ni \alpha \mapsto (\alpha^p,N(\alpha)) \in L \times k\,,\] where $N$ is the weighted norm map defined in (\ref{defN}). We can embed $k$ in $M$ via the map $\iota:k \to M \simeq L\times k$ sending $a$ to $(a,a^{n/p})$. The choice is such that $\partial(a) = \iota(a^p)$. 

Let $f \in \Map_k(\Omega',\kappa(X_\kbar)^\times)$ be the map \[
\omega' \mapsto \left\{\begin{array}{cc} (x-x(\omega)z)/z & \text{ if $\omega' = p[\omega]$, }\\ y/z^{n/p} & \text{ if $\omega' = \sum_{\omega\in \Omega}n_\omega[\omega]$. }\end{array}\right.
\] Then $f$ is a Galois equivariant family of functions $f_\omega$ parameterized by $\Omega'$, whose divisors are supported on the union of $\Omega$ and the support of $\fm$. Moreover if $[{\bf w}]\in \Map_k(\Omega,\Div(X_\kbar))$ denotes the map $\left(\omega \mapsto [\omega]\right)$ and we interpret $\iota(\fm)$ as the map
\[
\iota(\fm) = \left(\omega' \mapsto \left\{\begin{array}{cc} \fm & \text{ if $\omega' = p[\omega]$, }\\ \frac{n}{p}\fm & \text{ if $\omega' = \sum_{\omega\in \Omega}n_\omega[\omega]$. }\end{array}\right.\right) \in \Map_k(\Omega',\Div(X_{\kbar}))\,,
\]
then the family of divisors corresponding to $f$ is $\diw(f) = \partial[{\bf w}] - \iota(\fm) \in \Map_k(\Omega',\Div(X_\kbar))\,.$

Following the terminology in \cite{PoonenSchaefer} we will say a divisor is {\em good} if its support is disjoint from $\Omega$ and $\fm$. For any good divisor, $D = \sum_P n_P[P] \in \Div(X_\kbar)$, we may define $f(D) = \prod_Pf(P)^{n_P} \in \Mbar^\times$. Note that if $D \in \Div(X)$, then $f(D) \in M^\times$. Every $k$-rational divisor is linearly equivalent to a good $k$-rational divisor. Using this and applying Weil reciprocity one can prove the following proposition. For details we refer the reader to \cite[Proposition 3.1]{CreutzThesis}, \cite[Section 5]{PoonenSchaefer} or \cite[Section 4]{Unfake}.

\begin{Proposition}
The function $f$ induces a unique homomorphism
\[ f : \Pic(X) \to \cdell \] with the property that the image of the class of any good divisor $D \in \Div(X)$ is given by $f(D)$ as defined above.
\end{Proposition}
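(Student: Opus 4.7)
The plan is to define $f$ first on good $k$-rational divisors via the formula $D=\sum_P n_P[P] \mapsto \prod_P f(P)^{n_P}$, extend to all of $\Pic(X)$ using the moving lemma already recorded in the excerpt (every class contains a good $k$-rational representative), and finally invoke Weil reciprocity to check that the extension is well-defined modulo $\iota(k^\times)\partial(L^\times)$. Multiplicativity on good divisors is immediate, and uniqueness of any such extension is automatic once good representatives are known to reach every class in $\Pic(X)$.

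The core verification is the following. Suppose $D_1, D_2$ are good $k$-rational divisors with $D_1 - D_2 = \divv(g)$ for some $g \in \kappa(X)^\times$. Since $D_1 - D_2$ is good, $\supp(\divv g)$ is disjoint from $\Omega \cup \supp(\fm)$, which contains $\supp(\divv f_{\omega'})$ for every $\omega' \in \Omega'$. Weil reciprocity for the pair $(g, f_{\omega'})$ therefore reduces to the sign-free identity
\[ f_{\omega'}(\divv g) \;=\; g(\divv f_{\omega'}) \;=\; g\bigl(\partial[{\bf w}](\omega')\bigr)\cdot g\bigl(\iota(\fm)(\omega')\bigr)^{-1}, \]
since the disjoint supports force at least one of the two exponents in the tame symbol to vanish at every point.

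Assembling these identities as $\omega'$ varies over $\Omega'$, the first factor defines the element $\partial(\alpha) \in \partial(L^\times)$, where $\alpha \in L^\times$ is the restriction $\omega \mapsto g(\omega)$, a well-defined nonvanishing element of $L^\times$ because $\supp(\divv g)\cap \Omega = \emptyset$. The second factor is $\iota(g(\fm))$ by direct inspection of the defining formulas for $\iota$ and $\fm$; note $g(\fm) \in k^\times$ because $g$ and $\fm$ are both $k$-rational. Hence $f(\divv g) = \partial(\alpha)\cdot\iota(g(\fm))^{-1} \in \iota(k^\times)\partial(L^\times)$, as required. The main technical ingredient is the moving lemma producing good representatives in every class, after which the computation above is purely formal bookkeeping.
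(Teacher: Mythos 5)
Your proposal is correct and follows exactly the route the paper indicates (and defers to \cite{CreutzThesis}, \cite{PoonenSchaefer}, \cite{Unfake}): move to good representatives, then apply Weil reciprocity to the disjointly supported pair $(g,f_{\omega'})$, identifying the resulting product over $\Omega'$ as $\partial(\alpha)\cdot\iota(g(\fm))^{-1}\in\iota(k^\times)\partial(L^\times)$. The computation of the two factors is right, so nothing further is needed.
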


\begin{Remark}
The $(x-T,y)$ map of Stoll and van Luijk defined in \cite{Unfake} differs from ours slightly. The second factor of their map is defined using the function $\gamma y/z^{n/p}$ where $\gamma$ is some $p$-th root of $c$. Hence their map and ours agree in degree $0$ only. The projection $\pr_1 : M \simeq L\times k \to L$ induces a map $\cdell \to L^\times/k^\times L^{\times p}$. Composing this with either $f$ or the $(x-T,y)$ map defined in \cite{Unfake} one recovers the $(x-T)$ map defined in \cite{PoonenSchaefer}. The main advantage of our definition over the others is that it defines a homomorphism on all of $\Pic(X)$ and not just the degree divisible by $p$ part. The map used in \cite{BruinStoll,Mourao} to do a descent on $X$ is the restriction of $\pr_1\circ f$ to $X(k) \subset \Pic^1(X)$.
\end{Remark}

Recall that $c \in k^\times$ is the leading coefficient of the polynomial defining $X$. For $r \in \Z$ define
\[ \calH^r_k = \frac{\{ (\alpha,s) \in L^\times\times k^\times \,:\, c^r\cdot N(\alpha) = s^p \}}{(\gamma\alpha^p,\gamma^{n/p} N(\alpha)) \in L^\times\times k^\times \,: \alpha \in L^\times, \gamma \in k^\times \}} \subset \cdell\]

\begin{Lemma}
\label{DescribeHr}
For $r \in \Z$, \[ \calH^r_k = \left(f(D)\partial(\Lbar^\times)\right)^{\gal{k}}/\iota(k^\times)\partial (L^\times)\,.\]
where $D \in \Pic^r(X_\kbar)$ is any divisor class of degree $r$. In particular, $\calH^0_k = (\partial(\Lbar^\times))^{\gal{k}}/\iota(k^\times)\partial(L^\times)$.
\end{Lemma}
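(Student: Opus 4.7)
The plan is to establish the equality as subsets of $\cdell$ by exhibiting a concrete representative of $f(D)$ coming from a good divisor and then matching the two descriptions directly; the key ingredient is the defining equation $y^p = ch(x)$.

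First I would pick a good $\kbar$-rational representative $D_0 = \sum_P n_P[P]$ of the class $D$ and compute $f(D)$ explicitly: set $\beta(\omega) = \prod_P (x(P)-x(\omega))^{n_P}$ for $\omega \in \Omega$ and $t = \prod_P y(P)^{n_P}$, so that $(\beta,t) \in \Lbar^\times \times \kbar^\times = \Mbar^\times$ represents $f(D)$. Combining $y^p = ch(x)$ with the factorisation $h(x) = \prod_i h_i(x)^{n_i}$ and the identity $N_{K_i/k}(\beta_i) = h_i(x)(D_0)$ yields the crucial relation $t^p = c^r N(\beta)$. One direction of the inclusion is then immediate: any $m = f(D)\partial(\lambda) \in M^\times$ equals $(\beta\lambda^p, tN(\lambda))$, which satisfies $c^r N(\beta\lambda^p) = t^p N(\lambda)^p = (tN(\lambda))^p$, so $m$ lies in the constrained set $\{(\alpha,s) : c^rN(\alpha)=s^p\}$.

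For the reverse inclusion, suppose $(\alpha,s) \in L^\times \times k^\times$ satisfies $c^r N(\alpha) = s^p$. The ratio $(\alpha/\beta, s/t) \in \Mbar^\times$ then satisfies $N(\alpha/\beta) = (s/t)^p$. Since $\Lbar$ is a product of copies of $\kbar$ and $\Char k \ne p$, one may choose $\lambda_0 \in \Lbar^\times$ with $\lambda_0^p = \alpha/\beta$; taking norms forces $N(\lambda_0) = \zeta \cdot s/t$ for some $\zeta \in \mu_p(k)$. To kill this $p$-th root of unity discrepancy we invoke the $p$-th-power-free hypothesis on $h$: every exponent $n_i$ lies in $\{1,\dots,p-1\}$ and is coprime to $p$, so the weighted norm $N: \mu_p(\Lbar) \to \mu_p(k)$ is surjective. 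Pick $\eta \in \mu_p(\Lbar)$ with $N(\eta) = \zeta^{-1}$ and put $\lambda = \eta\lambda_0$; then $\lambda^p = \alpha/\beta$ and $N(\lambda) = s/t$, so $\partial(\lambda) = (\alpha,s)/(\beta,t)$, giving $(\alpha,s) = f(D)\partial(\lambda) \in f(D)\partial(\Lbar^\times)$. Galois invariance is automatic since $(\alpha,s) \in M^\times$.

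Finally, since $\iota(a) = \partial(a^{1/p})$ for any chosen $a^{1/p} \in \kbar$, we have $\iota(k^\times) \subset \partial(\Lbar^\times)$, so the quotients by $\iota(k^\times)\partial(L^\times)$ on the two sides are compatible and the equality of subsets of $\cdell$ follows. The main obstacle is the $\mu_p$-adjustment in the reverse inclusion: without the $p$-th-power-free hypothesis on $h$ the weighted norm restricted to $\mu_p(\Lbar)$ could fail to surject onto $\mu_p(k)$, and the matching would break down.
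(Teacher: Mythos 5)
Your proposal is correct and follows essentially the same route as the paper: both hinge on the computation $c^r N(\pr_1 f(D)) = \pr_2(f(D))^p$ for a good divisor via $y^p = ch(x)$, and on identifying the coset $f(D)\partial(\Lbar^\times)$ with $\{(\alpha,s) : c^r N(\alpha) = s^p\}$ by extracting $p$-th roots and fixing the $\mu_p$-ambiguity using surjectivity of the weighted norm on $\mu_p(\Lbar)$, which is where the $p$-th-power-free hypothesis enters in both arguments. The only cosmetic difference is that the paper first proves the $r=0$ case ($\partial(\Lbar^\times)$ itself is the norm-condition set) and then translates by $f(D)$, whereas you work directly with the ratio $(\alpha,s)/f(D)$.
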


\begin{proof}
First we claim that 
\[\partial(\Lbar^\times) = \{ (\alpha,s) \in \Lbar^\times\times \kbar^\times \,:\, N(\alpha) = s^p \}\,.\] By definition $\partial(\Lbar^\times) = \{ (\alpha^p,N(\alpha)) \,:\, \alpha \in \Lbar^\times \}$, so clearly \[\partial(\Lbar^\times) \subset \{ (\alpha,s) \in \Lbar^\times\times \kbar^\times \,:\, N(\alpha) = s^p \}\,.\] For the other inclusion, suppose $(a,s) \in \Lbar^\times\times\kbar^\times$ is such that $N(\alpha)=s^p$. Then for any $p$-th root $\beta \in \Lbar^\times$ of $\alpha$ we have $N(\beta)^p = s^p$. Hence $N(\beta) = \nu s$ for some $\nu \in \mu_p(\kbar)$. Since $h(x)$ is $p$-th power free, the weighted norm map $N:\mu_p(\Lbar) \to \mu_p(\kbar)$ is surjective. Hence there must exist $\nu' \in \mu_p(\Lbar)$ such that $\nu'\beta \in \Lbar^\times$ satisfies $\partial\beta = (\nu'\beta)^p,N(\nu'\beta)) = (\alpha,s)$. This establishes the claim.

For $i = 1,2$, let $\pr_i$ denote the projection of $M \simeq L\times k$ onto the $i$-th factor. For any point $P = (x_0,y_0) \in X$ we have \[cN(\pr_1\circ f(P)) = c\prod_{\omega \in \Omega}(x_0 - x(\omega))^{n_\omega}=ch(x_0) = y_0^p = \pr_2(f(P))^p\,,\] where $n_\omega$ denotes the multiplicity of $\omega$ as a root of $h(x)$. So for any good divisor $D$ of degree $r$ we have $c^rN(\pr_1\circ f(D)) = \pr_2(f(D))^p\,,$ and, in light of the claim above, we have
\[ f(D)\partial(\Lbar^\times) = \{ (\alpha,s) \in \Lbar^\times\times \kbar^\times \,:\, c^r\cdot N(\alpha) = s^p \}\,.\] In particular, the coset $f(D)\partial(\Lbar^\times)$ depends only on the degree of $D$. The same is then true of its Galois invariant subset. The Lemma now follows easily.
\end{proof}

\begin{Corollary}
\label{H1emptyimplies}
If $\calH^1_k = \emptyset$, then $\Pic^1(X) = \emptyset$.
\end{Corollary}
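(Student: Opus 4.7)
The plan is to prove the contrapositive: assuming $\Pic^1(X)$ is nonempty, I will exhibit an element of $\calH^1_k$. Pick any class $[D] \in \Pic^1(X)$. Since every $k$-rational divisor class contains a good $k$-rational divisor representative (in the sense used just before the Proposition), we may assume $D \in \Div(X)$ itself is good of degree $1$. Then $f(D) = \prod_P f(P)^{n_P} \in M^\times$ is defined as an honest element of $M^\times$ (not merely of $\Mbar^\times$), because the Galois-equivariance of the family $f$ together with the $k$-rationality of $D$ forces $f(D)$ to be fixed by $\gal{k}$.

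The key input is then Lemma~\ref{DescribeHr} applied with $r = 1$: it identifies $\calH^1_k$ with $(f(D)\partial(\Lbar^\times))^{\gal{k}}/\iota(k^\times)\partial(L^\times)$. Since $f(D) \in M^\times$, its class in $\cdell$ is automatically Galois-invariant and sits inside the coset $f(D)\partial(\Lbar^\times)$; therefore it represents a well-defined element of $\calH^1_k$, showing $\calH^1_k \neq \emptyset$.

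There is not really a difficult step here: all the serious content is contained in Lemma~\ref{DescribeHr} (which packages the identification of the coset $f(D)\partial(\Lbar^\times)$ with the explicit set of pairs $(\alpha,s)$ satisfying $c\cdot N(\alpha) = s^p$) and in the fact that the homomorphism $f$ extends to all of $\Pic(X)$, including the degree $1$ part. The only mild point to check is that a $k$-rational class in $\Pic^1(X)$ can indeed be represented by a good $k$-rational divisor, which is a standard moving lemma on a curve over an infinite field and is already used implicitly in the construction of $f$.
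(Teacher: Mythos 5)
Your proof is correct and is essentially the paper's argument: the paper's one-line proof simply observes that the image of $f:\Pic^1(X)\to\cdell$ is contained in $\calH^1_k$, which is exactly what you unpack via good representatives and Lemma~\ref{DescribeHr}. No issues.
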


\begin{proof}
The image of $f:\Pic^r(X) \to \cdell$ is contained in $\calH^r_k$.
\end{proof}

\subsection{The algebraic Selmer set}
\label{AlgebraicSelmerSet}
Over a global field one can combine the information from the various local versions of the map $f$ to obtain a finite subset of $\calH^r_k$ which contains the image of $\Pic^r(X)$.

\begin{Definition}
For a global field $k$ with completions $k_v$, define {\em algebraic $\phi$-Selmer sets}:
\begin{align*}
\Sel_{alg}^\phi(J/k) &= \left\{ \delta \in \calH^0_k\,:\,\text{ for all primes $v$, } \res_v(\delta) \in f(\Pic^0(X_{k_v})) \right\}\,,\\
\Sel_{alg}^\phi(X/k) &= \left\{ \delta \in \calH^1_k\,:\,\text{ for all primes $v$, } \res_v(\delta) \in f(X(k_v)) \right\}\,,\\
\Sel_{alg}^\phi(\calX/k) &= \left\{ \delta \in \calH^1_k\,:\,\text{ for all primes $v$, } \res_v(\delta) \in f(\Pic^1(X_{k_v})) \right\}\,.
\end{align*}
\end{Definition}

Recall that the projection $\pr_1 : M \simeq L\times k \to L$ induces a map $\pr_1: \cdell \to L^\times/k^\times L^{\times p}$.  The {\em fake $\phi$-Selmer group} considered in \cite{PoonenSchaefer} is equal to $\pr_1\left(\Sel^\phi_{alg}(J/k)\right)$. The {\em unfaked $\phi$-Selmer group} considered in \cite{Unfake} is equal to $\Sel^\phi_{alg}(J/k)$. Their results show that if $X$ has divisors of degree $1$ everywhere locally, then $\Sel^\phi_{alg}(J/k)$ can be identified with the $\phi$-Selmer group of $J$. In particular, $\Sel_{alg}^\phi(J/k)$ is finite. Provided $\Sel_{alg}^\phi(\calX/k)$ is nonempty it is a coset of $\Sel_{alg}^\phi(J/k)$ inside $\cdell$. This implies that $\Sel_{alg}^\phi(\calX/k)$ is also finite. If in addition, $\delta \in \Sel_{alg}^\phi(X/k) \ne \emptyset$, then $\Sel_{alg}^\phi(\calX/k) = \delta\cdot\Sel_{alg}^\phi(J/k)$. In particular, $\Sel_{alg}^\phi(X/k) \subset \Sel_{alg}^\phi(\calX/k)$. The set $\pr_1\left(\Sel^\phi_{alg}(X/k)\right)$ is equal to the {\em fake $\phi$-Selmer set} considered in \cite{BruinStoll, Mourao} where it is shown to be a quotient of the $\phi$-Selmer set of $X$ (see Definition \ref{Xcov}). As we shall see (Corollary \ref{Xselmer}) $\Sel_{alg}^\phi(X/k)$ is in one to one correspondence with $\phi$-Selmer set of $X$. 

One motivation for considering this set is that it can explain the failure of the Hasse principle for $X$. Similarly, one can easily deduce the implication: \[ \left(\Sel_{alg}^\phi(\calX/k) = \emptyset\right)\,\, \Longrightarrow\,\, \left(\Pic^1(X) = \emptyset\right)\,.\] When $X$ has points everywhere locally we can say even more.

\begin{Theorem}
\label{mainthm}
Suppose $k$ is a global field and $X$ is everywhere locally solvable. Then $\Sel_{alg}^\phi(\calX/k)$ is nonempty if and only if the torsor $\calX$ is divisible by $\phi$ in $\Sha(J/k)$. 
\end{Theorem}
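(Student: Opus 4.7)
The plan is to identify $\Sel_{alg}^\phi(\calX/k)$ with the geometric $\phi$-Selmer set $\Sel^\phi(\calX/k)$ of Definition \ref{DefineCoverings}, after which Proposition \ref{ShaDiv} (applied with $\varphi = \phi$, $A = A' = J$, and $T = \calX$) immediately gives the equivalence with divisibility of $\calX$ by $\phi$ in $\Sha(J/k)$.

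First I would recall, from \cite{PoonenSchaefer} and \cite{Unfake}, the Galois-equivariant embedding of $J[\phi]$ into the étale algebra associated to $\Omega'$. This induces an injection $\HH^1(k,J[\phi]) \hookrightarrow \cdell$ whose image is precisely $\calH^0_k$: the weighted norm condition $N(\alpha) = s^p$ (with $r=0$) is exactly the cocycle condition characterising classes coming from $J[\phi]$. Under this identification the $(x-T,y)$ map $f$ on $\Pic^0(X)$ coincides with the connecting map from the Kummer sequence for $\phi$, and the results of \cite{PoonenSchaefer, Unfake} show that under the hypothesis $\Pic^1(X_{k_v}) \ne \emptyset$ for every $v$ (which is implied by everywhere local solvability of $X$) the local conditions match, giving $\Sel_{alg}^\phi(J/k) = \Sel^\phi(J/k)$.

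Next I would extend this identification to the torsor $\calX$. Both $\Cov^\phi(\calX/k)$ and, once nonempty, $\calH^1_k$ are principal homogeneous spaces --- under $\HH^1(k,J[\phi])$ and $\calH^0_k$ respectively --- and the canonical identification from the previous paragraph is compatible with these actions. To construct an affine bijection $\calH^1_k \to \Cov^\phi(\calX/k)$, I would start from a $\kbar$-divisor class $D \in \Pic^1(X_\kbar)$. Using Lemma \ref{DescribeHr}, the coset $f(D)\partial(\Lbar^\times)$ depends only on $\deg D = 1$ and is $\gal{k}$-stable; translating the trivial $\phi$-covering $\phi \colon J \to J$ along the $\kbar$-isomorphism $\calX \simeq J$, $P \mapsto P - [\omega_0]$, produces a canonical $\phi$-covering of $\calX_\kbar$ whose descent data along $\gal{k}$ is captured by the element of $\calH^1_k$ represented by $f(D)$. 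The compatibility of the twisting-principle identifications then promotes this to an affine bijection of principal homogeneous spaces over $k$.

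The crucial step is then to verify that the local conditions on the two sides match: $\res_v(\delta) \in f(\Pic^1(X_{k_v}))$ if and only if the associated $\phi$-covering of $\calX$ has a $k_v$-point. The forward direction constructs a $k_v$-point of the covering as a lift of a $k_v$-rational divisor class of degree $1$ via the identification above. The reverse direction projects a $k_v$-point of the covering under $\pi \colon S \to \calX$ to obtain a $k_v$-rational divisor class of degree $1$ on $X$, whose image under $f$ matches $\res_v(\delta)$ modulo $\iota(k_v^\times)\partial(L_v^\times)$. Granted these identifications, $\Sel_{alg}^\phi(\calX/k) \ne \emptyset$ iff $\Sel^\phi(\calX/k) \ne \emptyset$, and Proposition \ref{ShaDiv} concludes.

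The main obstacle is the simultaneous construction of the affine bijection $\calH^1_k \leftrightarrow \Cov^\phi(\calX/k)$ and the verification that local conditions transfer; everything else reduces to the machinery already established in Section \ref{CoveringInterpretation} together with the $(x-T,y)$-descent literature.
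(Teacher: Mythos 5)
Your overall strategy is the paper's: reduce Theorem \ref{mainthm} to an identification of $\Sel_{alg}^\phi(\calX/k)$ with the geometric Selmer set $\Sel^\phi(\calX/k)$ and then invoke Proposition \ref{ShaDiv}. But there is a genuine gap at the central step. You propose an affine bijection $\calH^1_k \to \Cov^\phi(\calX/k)$, asserting that $\HH^1(k,J[\phi])$ injects into $\cdell$ with image exactly $\calH^0_k$. That identification is false in general: what embeds into $\bar L^\times$ is $J_\fm[\phi]$ (the $\phi$-torsion of the generalized Jacobian), and the resulting isomorphism (Proposition \ref{kerUps1}) is $\calH^0_k \simeq \ker\Upsilon$, where $\Upsilon : \HH^1(k,J[\phi]) \to \HH^2(k,\mu_p)$ is the connecting map coming from $1 \to \mu_p \to J_\fm[\phi] \to J[\phi] \to 0$. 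Since $\Cov^\phi(\calX/k)$ is a principal homogeneous space under all of $\HH^1(k,J[\phi])$ while $\calH^1_k$ (when nonempty) is one under $\calH^0_k \simeq \ker\Upsilon$, no affine bijection between them can exist unless $\Upsilon$ vanishes. The paper repairs this by cutting down to the subset $\Cov_{good}^\phi(\calX/k)$ (equivalently $\Cov_0^\phi(X/k)$: coverings for which $\pi^*[\omega_0]$ is linearly equivalent to a $k$-rational divisor), which is a principal homogeneous space under $\ker\Upsilon$; the maps $\mathfrak{f}$ and $\mathfrak{F}$ are affine isomorphisms onto $\calH^1_k$ only after this restriction.

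This forces a second step that your proposal omits: one must show that every everywhere locally solvable $\phi$-covering of $\calX$ lies in $\Cov_{good}^\phi(\calX/k)$ --- otherwise emptiness of $\Sel_{alg}^\phi(\calX/k)$ would not exclude elements of $\Sel^\phi(\calX/k)$ hiding outside the good locus. The paper does this in Proposition \ref{MainCor} by observing that the obstruction to $\pi^*[\omega_0]$ containing a rational divisor lies in $\Br(k)$, which satisfies the local-global principle, and then applying Lemma \ref{pointsaregood}(1) at every place, using both $X(k_v)\neq\emptyset$ and $\calY(k_v)\neq\emptyset$. This is precisely where the hypothesis that $X$ is everywhere locally solvable enters irreducibly (cf.\ the remark on $y^2=3x^6+3$, where the conclusion fails without it). Your verification of the local conditions is in the right spirit, but note also that $f$ is defined on $\Pic^1(X_{k_v})$ and not on all of $\calX(k_v)$; the two agree here only because $X(k_v)\neq\emptyset$ kills the local Brauer obstruction $\theta_X$. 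With the restriction to good coverings and the Brauer-group argument supplied, your outline matches the paper's proof.
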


In light of Proposition \ref{ShaDiv}, to prove the theorem it will suffice to show that when $X$ is everywhere locally solvable $\Sel^\phi(\calX/k)$ and $\Sel_{alg}^\phi(\calX/k)$ are in one to one correspondence. This will be accomplished with Proposition \ref{MainCor} below.

\begin{Corollary}
\label{Cortomainthm}
Suppose $k$ is a global field and $X$ is everywhere locally solvable. If $\Sel_{alg}^\phi(\calX/k)$ is empty, then $\dim_{\F_p}\Sha(J/k)[\phi] \ge 2$. If, in addition, $\dim_{\F_p}\Sha(J/k)[\phi] \le 2$, then $\Sha(J/k)(p) \simeq \Z/p\times\Z/p$.
\end{Corollary}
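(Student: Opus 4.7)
The plan is to combine Theorem~\ref{mainthm}, Lemma~\ref{CTShaDiv}, and the classification of non-degenerate alternating forms on finite torsion modules over the DVR $\Z_p[\zeta]$. Theorem~\ref{mainthm} translates the hypothesis $\Sel_{alg}^\phi(\calX/k)=\emptyset$ into $\calX\notin\phi\Sha(J/k)$; together with the fact from Section~\ref{CyclicCovers} that $\phi\cdot\calX=0$ in $\HH^1(k,J)$ and that everywhere-local-solvability of $X$ places $\calX$ in $\Sha(J/k)$, this exhibits $\calX$ as a nonzero element of $\Sha(J/k)[\phi]$. Lemma~\ref{CTShaDiv} then produces $\eta\in\Sha(J^\vee/k)[\phi^\vee]$ with $\langle\calX,\eta\rangle\ne 0$ under Cassels--Tate. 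The canonical principal polarization $\lambda:J\to J^\vee$ identifies the action of $\zeta$ on $J^\vee$ with that of $\zeta^{-1}$ on $J$, so $\phi^\vee=1-\zeta^{-1}=-\zeta^{-1}\phi$ is a unit multiple of $\phi$, and $\eta$ may be viewed in $\Sha(J/k)[\phi]$.

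For the first inequality, I would use that the Cassels--Tate pairing attached to the principal polarization of a Jacobian is alternating on the $p$-primary part (for odd $p$ this is immediate from antisymmetry; for $p=2$ this follows from Poonen--Stoll's refinement). Then $\langle\calX,\calX\rangle=0\ne\langle\calX,\eta\rangle$ forces $\eta\notin\F_p\cdot\calX$, so $\calX$ and $\eta$ span a two-dimensional $\F_p$-subspace of $\Sha(J/k)[\phi]$.

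Now suppose $\dim_{\F_p}\Sha(J/k)[\phi]=2$. Because $\phi^{p-1}$ and $p$ differ by a unit in $\End(J)$ (as noted in Section~\ref{CyclicCovers}), the $p$-primary and $\phi$-primary parts of $\Sha(J/k)$ agree, and $\Sha(J/k)(p)=\Sha(J/k)(\phi)$ is a torsion module over the DVR $\Z_p[\zeta]$, whose uniformizer is $\phi$ and whose residue field is $\F_p$. The structure theorem then writes it as $(\Q_p[\zeta]/\Z_p[\zeta])^r\oplus\bigoplus_{i=1}^s\Z_p[\zeta]/\phi^{a_i}$ with $r+s=2$.

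The bulk of the work is the elimination of cases. If $r=2$, every element is divisible by $\phi$, contradicting $\calX\notin\phi\Sha(J/k)$. If $r=s=1$, the quotient $\Sha(J/k)(p)/\Sha(J/k)(p)_{\mathrm{div}}$ is cyclic as a $\Z_p[\zeta]$-module, but the $\End(J)$-compatibility of Cassels--Tate combined with $\phi^\vee\sim\phi$ makes any alternating non-degenerate pairing on a cyclic $\Z_p[\zeta]$-module vanish on its generator and hence vanish identically, a contradiction. Thus $r=0$, $s=2$, and the standard classification of symplectic forms over a DVR (applied to the resulting non-degenerate alternating $\Z_p[\zeta]$-compatible form on $\Sha(J/k)(p)$) forces $a_1=a_2=:a$. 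Finally, if $a\ge 2$ then $\Sha(J/k)[\phi]=\phi^{a-1}\Sha(J/k)(\phi)\subset\phi\Sha(J/k)$, once again contradicting $\calX\notin\phi\Sha(J/k)$. Hence $a=1$ and $\Sha(J/k)(p)\simeq\Z/p\times\Z/p$. The main obstacle I anticipate is stating cleanly the $\End(J)$-equivariant/symplectic refinement of Cassels--Tate and dealing with the case $p=2$; the module-theoretic bookkeeping in the last step should then be routine.
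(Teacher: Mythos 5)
Your argument for the first inequality is essentially the paper's (stated element\-/wise rather than on the quotient $G=\Sha(J/k)[\phi]/\phi\Sha(J/k)[\phi^2]$), but the input you invoke is misstated for $p=2$: the Cassels--Tate pairing on the $2$-primary part of $\Sha$ of a Jacobian is \emph{not} alternating in general. Poonen--Stoll give $\langle x,x\rangle=\langle x,c\rangle$ with $c$ the class of $\PIC^{g-1}(X)$; under our hypotheses $c=(g-1)\calX$, and since $\calX\notin 2\Sha(J/k)$ there is, for even $g$, some $y\in\Sha(J/k)[2]$ with $\langle y,y\rangle=\langle y,\calX\rangle\neq 0$ --- so your blanket ``alternating on the $p$-primary part'' is actually contradicted by the very hypotheses of the corollary. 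What you need is only the single value $\langle\calX,\calX\rangle=0$, and that does hold, but it comes from Poonen--Stoll's formula expressing $\langle c,c\rangle$ in terms of the number of deficient places, which vanishes precisely because $X$ is everywhere locally solvable. This is where that hypothesis enters (the paper flags this in the remark following the corollary), and it needs to be said.

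For the second statement you take a genuinely different and much longer route than the paper, and one case-elimination is wrong as stated: it is false that a nondegenerate alternating pairing on a cyclic $\Z_p[\zeta]$-module must vanish identically. For $p=3$ the cyclic module $\Z_3[\zeta]/\phi^2\simeq \F_3[\zeta]/(\zeta-1)^2\simeq(\Z/3)^2$ carries the standard symplectic form, which is nondegenerate, alternating, and compatible with the $\zeta$-action and the Rosati involution because $\zeta$ acts through $\mathrm{SL}_2(\F_3)$. To kill the case $r=s=1$ you must first use $\calX\in\Sha(J/k)[\phi]\setminus\phi\Sha(J/k)$ to force $a=1$ (the image of $\calX$ generates the cyclic quotient, being outside the maximal ideal, and is killed by $\phi$), and only then does the pairing argument on the resulting one-dimensional quotient apply; similarly your appeal to a ``standard'' symplectic classification over $\Z_p[\zeta]$ with its nontrivial involution is not off-the-shelf. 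All of this is avoidable: your own first step already shows that $\calX$ and $\eta$ remain independent in $G$, so $\dim_{\F_p}G\geq 2$; combined with $\dim_{\F_p}\Sha(J/k)[\phi]\leq 2$ this forces $\phi\Sha(J/k)[\phi^2]=0$, hence $\Sha(J/k)[\phi^2]=\Sha(J/k)[\phi]$, hence by induction $\Sha(J/k)[\phi^n]=\Sha(J/k)[\phi]$ for all $n$, and since $p$ is a unit times $\phi^{p-1}$ this gives $\Sha(J/k)(p)=\Sha(J/k)[\phi]\simeq\Z/p\times\Z/p$ in two lines, with no structure theory at all.
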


\begin{proof}
Under the assumptions the theorem implies that $\calX$ represents a nontrivial class in the finite abelian group $G = \frac{\Sha(J/k)[\phi]}{\phi\Sha(J/k)[\phi^2]}$. Under the canonical identification of $J$ with its dual, $\phi$ is self dual (up to a unit). It then follows from Lemma \ref{CTShaDiv} and \cite[Corollary 12]{PoonenStoll} that the Cassels--Tate pairing induces a nondegenerate, alternating pairing on $G$. Hence the order of $G$ is a positive even power of $p$. This establishes the first statement. For the second, note that the assumptions imply that $\phi\Sha(J/k)[\phi^2] = 0$, and use that $\phi^{p-1} = p$ up to a unit.
\end{proof}

\begin{Remark}
To show $G$ has square order it is enough to assume that $p$ is odd or that $X$ has a $k_v$-rational divisor of degree $1$ for each prime $v$. We use the assumption that $X$ is everywhere locally solvable to ensure that $\calX$ represents a nontrivial element of $G$. Indeed this assumption is used in our proof of Theorem \ref{mainthm} when we apply Lemma \ref{pointsaregood} in the proof of Proposition \ref{MainCor}. While it may be possible to relax this hypothesis, some assumption on the existence of $k_v$-rational divisors of degree $1$ is required. For the curve $X:y^2 = 3x^6+3$, one can show that $\Pic^1(X_{\Q_2}) = \emptyset$, while $\calX(\Q) \ne \emptyset$. So the algebraic Selmer set is empty, but $\calX \in 2\Sha(J/\Q)$.
\end{Remark}

\begin{Remark}
It is not generally true that $\Sha(J/k)[\phi]$ has square order. Well known examples with $p = 2$ are given in \cite{PoonenSchaefer} and are necessarily explained by the fact that $X$ fails to have a $k_v$-rational divisor of degree $1$ at an odd number of primes. An example with $p = 3$ where $X$ has a rational point is given in \cite{FisherCES}.
\end{Remark}

\subsection{Computing the algebraic Selmer set}
\label{COMPU} Before carrying on with the proof of Theorem \ref{mainthm} we briefly discuss how $\Sel_{alg}^\phi(\calX/k)$ can be computed in practice. For an extension $K/k$ set $\mathfrak{L}(K) = (L \otimes_k K)^\times/K^\times (L \otimes_k K)^{\times p}$, and use $\res_K$ to denote the canonical map $\mathfrak{L}(k) \to \mathfrak{L}(K)$. The weighted norm $N:L \to k$ induces a map $N:\mathfrak{L}(k) \to k^\times/k^{\times p}$. If $k$ is a local field, an element of $\mathfrak{L}(k)$ is said to be {\em unramified} if its image under $\res_{k^{u}}$ is trivial, where $k^{u}$ denotes the maximal unramified extension of $k$. If $k$ is a global field, an element $\delta \in \mathfrak{L}(k)$ is said to be {\em unramified at a prime $v$} of $k$ if $\res_{k_v}(\delta)$ is unramified.

Now suppose $k$ is a global field and let $S$ denote the set of primes of $k$ consisting of all primes of bad reduction, all nonarchimedean primes dividing $cp$ and all archimedean primes\footnote{Actually, one can get away with using a smaller set of primes. Compare with \cite[Cor. 4.7 and Prop. 5.12]{StollIMP}, \cite[Lem. 4.3]{BruinStoll}, \cite[Lem. 2.6]{Mourao}.}. Let $\mathfrak{L}(k)_S$ denote the subgroup of $\mathfrak{L}(k)$ consisting of elements which are unramified at all primes outside of $S$. This is a finite group which can be computed from the $S$-unit group and class group of each of the constituent fields of $L$ (see \cite[12.5--12.8]{PoonenSchaefer}). For an element $a \in k^\times$, let $\mathfrak{L}(k)_{S,a}$ denote the subset of $\mathfrak{L}(k)_S$ consisting of elements $\alpha$ such that $aN(\alpha) \in k^{\times p}$.

Computable descriptions of $\pr_1\left(\Sel_{alg}^\phi(J/k)\right)$ and $\pr_1\left(\Sel_{alg}^\phi(X/k)\right)$ are given in \cite[Theorem 13.2]{PoonenSchaefer}, \cite[Section 6]{BruinStoll} and \cite[Corollary 3.12]{Mourao}. They are the subsets of $\mathfrak{L}(k)_{S,1}$ and $\mathfrak{L}(k)_{S,c}$ cut out by certain local conditions. The former is the subgroup of elements which restrict into $\pr_1\circ f(\Pic^0(X_{k_v}))$ for all $v \in S$ while the latter is the subset which restricts into $\pr_1\circ f(X(k_v))$ for all primes with norm up to some explicit bound. For explicit descriptions of how to compute these local images see \cite{BruinStoll,Mourao,StollIMP}.

\begin{Proposition}
Suppose that $D_v \in X(k_v)$ for each $v \in S$. Then
\[\pr_1\left(\Sel_{alg}^\phi(\calX/k)\right) =  \left\{ \delta \in \mathfrak{L}(k)_{S,c} \,:\, \res_{k_v}(\delta) \in \pr_1(f(D_v))\cdot \pr_1(f(\Pic^0(X_{k_v})))\text{ for all $v \in S$} \right\}\,.\]
\end{Proposition}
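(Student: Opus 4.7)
The plan is to leverage the coset structure: whenever $\Sel_{alg}^\phi(\calX/k)$ is nonempty, it is a coset of $\Sel_{alg}^\phi(J/k)$ (acting by multiplication) inside $\calH^1_k$, so its $\pr_1$-image is a coset of $\pr_1 \Sel_{alg}^\phi(J/k)$ inside $\mathfrak{L}(k)$, and the latter has the computable description recalled just above. The local divisors $D_v$ for $v \in S$ will provide the shift of cosets.

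First I would establish the inclusion $\pr_1 \Sel_{alg}^\phi(\calX/k) \subseteq \{\text{right-hand side}\}$. Given $\tilde\delta \in \Sel_{alg}^\phi(\calX/k)$ and $\delta := \pr_1(\tilde\delta)$, the $c$-norm condition $cN(\delta) \in k^{\times p}$ is immediate from $\tilde\delta \in \calH^1_k$, and unramifiedness outside $S$ follows by the standard argument (as in \cite[Sec.~12]{PoonenSchaefer}) that $f$ carries $\Pic^1(X_{k_v})$ into the unramified subgroup at primes of good reduction away from $cp$. At $v \in S$, writing $\res_{k_v}(\tilde\delta) = f(E_v)$ for some $E_v \in \Pic^1(X_{k_v})$, the coset decomposition $\Pic^1(X_{k_v}) = D_v + \Pic^0(X_{k_v})$ combined with $f$ being a homomorphism yields $\pr_1 f(E_v) \in \pr_1 f(D_v) \cdot \pr_1 f(\Pic^0(X_{k_v}))$.

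For the reverse inclusion I would first dispatch the case $\Sel_{alg}^\phi(\calX/k) \ne \emptyset$: pick any $\tilde\delta_0$ in it with $\delta_0 := \pr_1(\tilde\delta_0)$, note that by the previous direction $\delta_0$ lies in the right-hand side, and observe that for any $\delta$ on the right-hand side the ratio $\eta := \delta/\delta_0$ lies in $\mathfrak{L}(k)_{S,1}$ and satisfies $\res_{k_v}(\eta) \in \pr_1 f(\Pic^0(X_{k_v}))$ for each $v \in S$ (since $\res_{k_v}(\delta)$ and $\res_{k_v}(\delta_0)$ lie in the same coset). By the known characterization, $\eta \in \pr_1 \Sel_{alg}^\phi(J/k)$, and lifting to $\tilde\eta \in \Sel_{alg}^\phi(J/k)$ gives $\tilde\delta_0 \cdot \tilde\eta \in \Sel_{alg}^\phi(\calX/k)$ projecting onto $\delta$. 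The remaining case, where $\Sel_{alg}^\phi(\calX/k) = \emptyset$, requires showing the right-hand side is empty as well; this I expect to be the main obstacle. Concretely, starting from a putative $\delta$ in the right-hand side, one chooses a lift $\tilde\delta_0 \in \calH^1_k$ of $\delta$ (which exists because $cN(\delta) \in k^{\times p}$) and must then adjust the $k^\times$-component using the $\mu_p(k)$-ambiguity in $\ker\bigl(\pr_1: \calH^0_k \to \mathfrak{L}(k)\bigr)$ together with the latitude in the local choices of $E_v \in \Pic^0(X_{k_v})$; matching the resulting local $\mu_p$-components compatibly across $v \in S$ is the delicate step, and may require an explicit analysis of $\ker(\pr_1)$ via the weighted-norm map on $\mu_p$.
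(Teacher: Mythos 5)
Your forward inclusion and your treatment of the case $\Sel_{alg}^\phi(\calX/k)\ne\emptyset$ are correct and follow essentially the same route as the paper: everything is reduced to the coset structure over $\Sel_{alg}^\phi(J/k)$, the cited description of $\pr_1\bigl(\Sel_{alg}^\phi(J/k)\bigr)$, and the fact that $\pr_1\circ f$ is a homomorphism together with $\Pic^1(X_{k_v})=[D_v]+\Pic^0(X_{k_v})$ for $v\in S$. The problem is the case you yourself flag as unresolved: showing that if the right-hand side is nonempty then $\Sel_{alg}^\phi(\calX/k)$ is nonempty. As it stands this is a genuine gap, and it is exactly where the content of the proposition lies, since the whole point of the result is to detect emptiness of $\Sel_{alg}^\phi(\calX/k)$ from $\pr_1$-data computed only at $v\in S$.

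To close it you need two separate inputs, both of which are implicit in the descriptions the paper cites. First, for $v\notin S$ the unfaked local condition is automatic for \emph{any} lift $\tilde\delta\in\calH^1_k$ of a $\delta\in\mathfrak{L}(k)_{S,c}$: the ambiguity $\ker\bigl(\pr_1:\calH^0_{k_v}\to\mathfrak{L}(k_v)\bigr)$ is the image of $\mu_p(k)\ni\nu\mapsto(1,\nu)$, and under the identification of Proposition \ref{kerUps1} the class $(1,\nu)$ is unramified at primes of good reduction not dividing $cp$, hence lies in $f(\Pic^0(X_{k_v}))$ there; so no matching problem arises outside $S$. Second, at $v\in S$ you must show that the local discrepancies $\nu_v\in\mu_p(k)$ (defined by $\res_v(\tilde\delta)=f(E_v)\cdot(1,\nu_v)$) can be simultaneously absorbed by a single global adjustment of the $k^\times$-component together with the latitude $\{\nu:(1,\nu)\in f(\Pic^0(X_{k_v}))\}$. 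This is precisely the ``unfaking'' surjectivity statement of Stoll--van Luijk (that $\pr_1\bigl(\Sel_{alg}^\phi(J/k)\bigr)$ really is the full fake Selmer group cut out by the local conditions at $v\in S$), whose proof adapts verbatim to degree $1$; the paper's one-line proof is delegating the matching argument to those cited descriptions rather than to an ad hoc analysis. Your instinct that the answer lies in an analysis of $\ker(\pr_1)$ via the weighted norm on $\mu_p$ is right, but without carrying it out (or explicitly invoking the surjectivity onto the fake Selmer group from \cite{Unfake} and noting that the same local computation applies with $\Pic^1$ in place of $\Pic^0$), the reverse inclusion is not established in the one case where it matters.
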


\begin{proof}
This follows from the descriptions of  $\pr_1(\Sel^\phi_{alg}(J/k))$ and $\pr_1(\Sel^\phi_{alg}(X/k))$ above and the fact that $\pr_1\circ f$ is a homomorphism.
\end{proof}

\begin{Remark}
This shows that while doing a $\phi$-descent on $J$ (i.e. computing $\pr_1\left(\Sel_{alg}^{\phi}(J/k)\right))$, one can determine whether $\Sel_{alg}^\phi(\calX/k)$ is empty or not with virtually no extra effort.
\end{Remark}

\section{$\phi$-coverings of $X$}
Our proof of Theorem \ref{mainthm} will involve relating $\Sel_{alg}^\phi(\calX/k)$ and $\Sel^\phi(\calX/k)$. To do this we first relate $\Sel_{alg}^\phi(X/k)$ with a certain set of coverings of $X$ which we now define.

\begin{Definition}
\label{Xcov}
A {\em $\phi$-covering of $X$} is a covering $Y \to X$ which arises as the pullback of some $\phi$-covering $\calY \to \calX$ along the canonical map $X \to \calX$ sending a point $P$ to the class of the divisor $[P]$. We use $\Cov^\phi(X/k)$ to denote the set of $k$-isomorphism classes of $\phi$-coverings of $X$. If $k$ is a global field, the {\em $\phi$-Selmer set of $X$} is defined to be the subset $\Sel^\phi(X/k) \subset \Cov^\phi(X/k)$ consisting of those coverings that are everywhere locally solvable.
\end{Definition}

It follows that any $\phi$-covering of $X$ is an $X$-torsor under $J[\phi]$ and that all $\phi$-coverings of $X$ are twists of one another. Hence $\Cov^\phi(X/k)$ is also a principal homogeneous space for $\HH^1(k,J[\phi])$. The action of twisting is compatible with base change, so the obvious map $\Cov^\phi(\calX/k) \to \Cov^\phi(X/k)$ is an affine isomorphism.

Our next goal is to relate $\calH^1_k$ with a certain subset of $\Cov^\phi(X/k)$ and use this to show that $\Sel_{alg}^\phi(X/k)$ and $\Sel^\phi(X/k)$ are in one to one correspondence. While we work with $\Sel_{alg}^\phi(X/k)$ rather than its image under $\pr_1$, this result was essentially established in \cite{BruinStoll,Mourao}. The only new ingredient here is to clarify the affine structure of these sets. This interpretation is, however, crucial to our proof of Theorem \ref{mainthm}.

We have an exact sequence
\begin{align}
\label{phitorsion}
1\to \mu_p \to J_\fm[\phi] \stackrel{q}\to J[\phi] \to 0\,,
\end{align}
where $J_\fm$ is the generalized Jacobian associated to the modulus $\fm \in \Div(X)$ (see \cite[Section 2]{PoonenSchaefer} or \cite[Chapter 5]{SerreAGCF}). Applying Galois cohomology gives an exact sequence
\begin{align}
\label{defineUps}
\HH^1(k,\mu_p) \to \HH^1(k,J_\fm[\phi]) \to \HH^1(k,J[\phi]) \stackrel{\Upsilon}\to \HH^2(k,\mu_p)\,.
\end{align}
The description of $J_\fm[\phi]$ in \cite[Section 6]{PoonenSchaefer} identifies it with the kernel of $\partial:\bar{L}^\times \to \bar{M}^\times$. This allows us to interpret the cocycle in the following proposition as taking values in $J[\phi]$.

\begin{Proposition}
\label{kerUps1}
There is an isomorphism $\calH_k^0 \simeq \ker\Upsilon$ which sends the class of $\partial(\alpha) \in \partial(\bar{L}^\times)^{\gal{k}}$ to the class of the $1$-cocycle $\gal{k}\ni \sigma \mapsto q(\sigma(\alpha)/\alpha) \in J[\phi]$.
\end{Proposition}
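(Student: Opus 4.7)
The plan is to exploit the short exact sequence of $\gal{k}$-modules
\[
1 \To J_\fm[\phi] \To \bar{L}^\times \stackrel{\partial}{\To} \partial(\bar{L}^\times) \To 0,
\]
which makes sense because, as recalled in the statement, $J_\fm[\phi]=\ker(\partial:\bar{L}^\times\to\bar{M}^\times)$. Taking Galois cohomology and using Hilbert 90 on each factor of $\bar{L}^\times=\prod \bar{K_i}^\times$ (Shapiro's lemma gives $H^1(k,\bar{K_i}^\times)=H^1(K_i,\bar{K_i}^\times)=0$), the connecting homomorphism becomes an isomorphism
\[
\partial(\bar{L}^\times)^{\gal{k}}/\partial(L^\times)\;\stackrel{\sim}{\To}\;H^1(k,J_\fm[\phi]),
\]
sending the class of $\partial(\alpha)$ to the class of the cocycle $\sigma\mapsto \sigma(\alpha)/\alpha$, which takes values in $\ker\partial=J_\fm[\phi]$ precisely because $\partial(\alpha)$ is Galois-invariant.

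Next, I would combine this isomorphism with the long exact sequence (\ref{defineUps}) arising from (\ref{phitorsion}). The kernel of $\Upsilon$ is by definition the image of $H^1(k,J_\fm[\phi])\to H^1(k,J[\phi])$, so it equals the quotient of $H^1(k,J_\fm[\phi])$ by the image of $H^1(k,\mu_p)\simeq k^\times/k^{\times p}$. Under the isomorphism above this will translate to the quotient of $\partial(\bar{L}^\times)^{\gal{k}}/\partial(L^\times)$ by a certain subgroup, and the whole proof reduces to identifying that subgroup with $\iota(k^\times)\partial(L^\times)/\partial(L^\times)$. Granting that, we obtain
\[
\calH_k^0=\partial(\bar{L}^\times)^{\gal{k}}/\iota(k^\times)\partial(L^\times)\;\stackrel{\sim}{\To}\;\ker\Upsilon,
\]
and composition of the cocycle $\sigma\mapsto\sigma(\alpha)/\alpha$ with $q:J_\fm[\phi]\to J[\phi]$ gives precisely the asserted formula.

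The one thing that actually requires a computation is the identification of the image of $H^1(k,\mu_p)$ inside $H^1(k,J_\fm[\phi])$ with $\iota(k^\times)$. I would do this directly on cocycles: for $a\in k^\times$ with a chosen $p$-th root $a^{1/p}\in\kbar^\times$, the Kummer class of $a$ corresponds to the cocycle $\sigma\mapsto\sigma(a^{1/p})/a^{1/p}\in\mu_p$, and pushing this forward along $\mu_p\hookrightarrow J_\fm[\phi]\subset\bar{L}^\times$ amounts to the same cocycle but viewed in $\bar{L}^\times$ via the diagonal embedding. Running the element $a^{1/p}\in\bar{L}^\times$ (the diagonal $p$-th root in each factor) through the connecting map then produces the class of $\partial(a^{1/p})=((a^{1/p})^p,N(a^{1/p}))=(a,a^{n/p})=\iota(a)$ in $\partial(\bar{L}^\times)^{\gal{k}}/\partial(L^\times)$. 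Surjectivity of this identification of images onto $\iota(k^\times)\partial(L^\times)/\partial(L^\times)$ is then a formality, using that $h(x)$ is $p$-th-power-free so that $N:\mu_p(\bar{L})\to\mu_p(\kbar)$ is surjective (this was already used in the proof of Lemma \ref{DescribeHr}) to handle the ambiguity in the choice of $p$-th root.

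The main obstacle is thus the bookkeeping in the previous paragraph: making sure that the connecting homomorphism for the Kummer sequence of $\mu_p\subset\bar{k}^\times$, when mapped into the connecting homomorphism for $J_\fm[\phi]\subset\bar{L}^\times$, really hits $\iota(k^\times)$ on the nose and not merely up to $\partial(L^\times)$. Once this naturality is in place, everything else — the isomorphism, its inverse, and the explicit cocycle formula — follows directly from the Hilbert 90 argument and the definitions of $\partial$ and $\iota$.
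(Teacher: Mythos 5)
Your argument is correct. The paper does not prove Proposition \ref{kerUps1} itself but defers to \cite{Unfake} (Proposition 3.1 and Remark 4.3), and your proof is essentially the argument given there: apply the connecting map for $1 \to J_\fm[\phi] \to \bar{L}^\times \stackrel{\partial}{\to} \partial(\bar{L}^\times) \to 1$ together with Shapiro's lemma and Hilbert 90 to get $\partial(\bar{L}^\times)^{\gal{k}}/\partial(L^\times) \simeq \HH^1(k,J_\fm[\phi])$, then quotient by the image of $\HH^1(k,\mu_p)$ to land in $\ker\Upsilon$. The one computation you flag does come out on the nose: the diagonal $p$-th root $a^{1/p} \in \bar{L}^\times$ has weighted norm $N(a^{1/p}) = (a^{1/p})^{\sum n_\omega} = a^{n/p}$, so $\partial(a^{1/p}) = (a,a^{n/p}) = \iota(a)$ exactly, and the ambiguity in the choice of root is harmless since a diagonal $\zeta \in \mu_p$ has $N(\zeta) = \zeta^n = 1$ (as $p \mid n$), so it lies in $J_\fm[\phi]$ and does not change $\partial$.
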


\begin{proof}
This can be found in \cite{Unfake} (see Proposition 3.1 and Remark 4.3).
\end{proof}


\begin{Definition}
\label{ob1}
Define 
\[\Cov_0^\phi(X/k) = \{ (Y,\pi) \in \Cov^\phi(X/k) \,:\, \pi^*[\omega_0] \text{ is linearly equivalent to a $k$-rational divisor} \}\,.\]
\end{Definition}

The pullbacks of the ramification points are all linearly equivalent, so $\pi^*[\omega_0]$ represents a $k$-rational divisor class. If $k$ is a global field and $Y$ is everywhere locally solvable, then every $k$-rational divisor class contains a $k$-rational divisor. Thus we see that $\Sel^\phi(X/k) \subset \Cov_0^\phi(X/k)$.

\begin{Proposition}
\label{DescentOnX}
The action of $\HH^1(k,J[\phi])$ on $\Cov^\phi(X/k)$ restricts to a simply transitive action of $\ker(\Upsilon) \simeq \calH_k^0$ on $\Cov_0^\phi(X/k)$. The function $f$ induces an affine isomorphism \[ \mathfrak{f}:\Cov_0^\phi(X/k) \to \calH_k^1\] with the property that for any $(Y,\pi) \in \Cov_0^\phi(X/k)$ and any extension $K/k$ such that there is a point $Q \in Y(K)$, \[f(\pi(Q))  = \mathfrak{f}((Y,\pi)) \text{ in $\calH_K^1$}. \]
\end{Proposition}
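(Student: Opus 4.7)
The plan is to construct $\mathfrak{f}$ directly and deduce the action statement from its properties. Given $(Y,\pi)\in\Cov_0^\phi(X/k)$, choose any $Q\in Y(\kbar)$ and provisionally define $\mathfrak{f}(Y,\pi)$ to be the class of $f([\pi(Q)])$. The key observation is that $f$ vanishes on $\Pic^0(X_{\kbar})=J(\kbar)$ modulo $\iota(\kbar^\times)\partial(\Lbar^\times)$: the deck transformation $\zeta$ sends $(x,y)\mapsto(x,\zeta y)$, so the explicit formula for $f$ gives $f(\zeta_*D)=(\pr_1 f(D),\zeta^{\deg D}\pr_2 f(D))$, which equals $f(D)$ when $\deg D=0$; since $\phi=1-\zeta$ acts surjectively on $J(\kbar)$, this shows $f|_{J(\kbar)}$ is trivial in the quotient. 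Because $\pi$ is the pullback of a $\phi$-covering over $\calX=\Pic^1(X)$, for any $Q,Q'\in Y(\kbar)$ the difference $[\pi(Q)]-[\pi(Q')]$ lies in $\phi(J(\kbar))$, so the class of $f([\pi(Q)])$ is independent of $Q$; applying the same argument with $Q^\sigma$ for $\sigma\in\gal{k}$ gives Galois invariance.

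To promote this Galois invariant class up to $\cdell$, one needs a $k$-rational representative. Hilbert~90 applied to $\Mbar^\times$ confines the obstruction to the $\partial(\Lbar^\times)$ factor, and, using the description of $J_\fm[\phi]$ as $\ker(\partial:\Lbar^\times\to\Mbar^\times)$ recalled just before Proposition~\ref{kerUps1}, one identifies this obstruction with $\Upsilon$ evaluated on the class of $(Y,\pi)\in\HH^1(k,J[\phi])$ via the exact sequence (\ref{phitorsion}). The defining condition of $\Cov_0^\phi(X/k)$ is precisely the vanishing of $\Upsilon(Y,\pi)$, so the descent succeeds and $\mathfrak{f}(Y,\pi)\in\calH_k^1$ by Lemma~\ref{DescribeHr}.

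For the action, $\Cov^\phi(X/k)$ is a principal homogeneous space for $\HH^1(k,J[\phi])$, and a twist by $\tau$ shifts $\Upsilon$ of a covering by $\Upsilon(\tau)$, so the subgroup preserving $\Cov_0^\phi(X/k)$ is exactly $\ker\Upsilon$ and the restricted action remains simply transitive. Affinity of $\mathfrak{f}$, under the identification $\ker\Upsilon\simeq\calH_k^0$ of Proposition~\ref{kerUps1}, reduces to a cocycle computation: if $\tau$ corresponds to $[\partial(\alpha)]\in\calH_k^0$ with $\alpha\in\Lbar^\times$, then the rigidification used to descend $f([\pi(Q)])$ shifts by $\alpha$, producing a new descent that differs from the old by $\partial(\alpha)$. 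Bijectivity then follows since both $\Cov_0^\phi(X/k)$ and $\calH_k^1$ are principal homogeneous spaces for $\calH_k^0$ (the latter by Lemma~\ref{DescribeHr}), and the identity $f(\pi(Q))=\mathfrak{f}(Y,\pi)$ in $\calH_K^1$ for $Q\in Y(K)$ is immediate from the construction.

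The main technical obstacle is the cocycle computation in the previous paragraph: correctly matching the twist action on rigidifications with the shift by $\partial(\alpha)$ in the descent to $\cdell$. This requires unifying three parallel descriptions of $\ker\Upsilon$: cohomologically via (\ref{defineUps}), via cocycles $\sigma\mapsto q(\sigma\alpha/\alpha)$ per Proposition~\ref{kerUps1}, and via the twist action on base points of $Y$ governed by the $\mu_p$-extension (\ref{phitorsion}).
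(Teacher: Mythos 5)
Your provisional definition of $\mathfrak{f}$ carries no information: by the computation in the proof of Lemma \ref{DescribeHr}, the coset $f(D)\partial(\Lbar^\times)$ depends only on $\deg D$, so the class of $f([\pi(Q)])$ in $\Mbar^\times/\iota(\kbar^\times)\partial(\Lbar^\times)$ is the \emph{same} for every covering $(Y,\pi)$ and every $Q\in Y(\kbar)$ (your own observation that $f$ kills $\phi J(\kbar)=J(\kbar)$ already shows this, since $\pi$ is surjective on $\kbar$-points). Hence the entire content of $\mathfrak{f}$ must reside in the passage from this single geometric class to a well-defined element of $M^\times/\iota(k^\times)\partial(L^\times)$, and that passage is exactly what you do not construct. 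Even when the relevant cohomological obstruction vanishes, a $k$-rational representative is determined only up to $\left(\iota(\kbar^\times)\partial(\Lbar^\times)\right)^{\gal{k}}/\iota(k^\times)\partial(L^\times)$, i.e.\ up to (at least) the group $\calH^0_k$ that acts simply transitively on $\calH^1_k$; so ``the rigidification used to descend $f([\pi(Q)])$'' must be a specific piece of data attached to $(Y,\pi)$, and you never say what it is. This is precisely where the paper's proof does its real work: because $(Y,\pi)\in\Cov_0^\phi(X/k)$, the complete linear system of $\pi^*[\omega_0]$ gives a projective embedding of $Y$ over $k$, the divisors $\pi^*[\omega]$ are cut out by a Galois-equivariant family of linear forms $l$ with coefficients in $L$, and comparing divisors yields $\pi^*f=\Delta\,\partial(l)/\iota(z\circ\pi)$ for a $\Delta\in M^\times$ well defined modulo $\iota(k^\times)\partial(L^\times)$; one sets $\mathfrak{f}((Y,\pi))=\Delta$. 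Your sketch never produces an analogue of $\Delta$.

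Two further gaps. First, you assert both that the descent obstruction equals $\Upsilon$ evaluated on the class of $(Y,\pi)$ and that membership in $\Cov_0^\phi(X/k)$ is equivalent to the vanishing of that value; neither is proved, and the latter requires choosing a base point of the principal homogeneous space $\Cov^\phi(X/k)$ and matching the Brauer obstruction of (\ref{thetaX}) against (\ref{defineUps}). Second, your bijectivity argument via principal homogeneous spaces silently assumes that $\Cov_0^\phi(X/k)\ne\emptyset$ whenever $\calH^1_k\ne\emptyset$, and that the $\ker(\Upsilon)$-action on $\Cov_0^\phi(X/k)$ is transitive rather than merely stabilizing it. The paper secures both points, together with surjectivity of $\mathfrak{f}$, by explicitly constructing for each $(\delta,s)$ representing a class of $\calH^1_k$ a covering $Y_{\delta,s}\subset\PP_\Omega\times X$ mapping to it, and by exhibiting the twisting cocycle of $Y_{\alpha^p\delta,N(\alpha)s}$ against $Y_{\delta,s}$ as $\sigma\mapsto q(\alpha^\sigma/\alpha)$, which is the concrete cocycle computation your last paragraph correctly identifies as the crux but does not carry out. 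As written, the proposal is a plausible outline rather than a proof.
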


\begin{Corollary}
\label{Xselmer}
Suppose $k$ is a global field. Then $\mathfrak{f}$ restricts to give a bijection $\mathfrak{f}: \Sel^\phi(X/k) \to \Sel_{alg}^\phi(X/k)\,.$
\end{Corollary}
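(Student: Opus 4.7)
The plan is to exploit Proposition \ref{DescentOnX}, which supplies an affine bijection $\mathfrak{f}:\Cov_0^\phi(X/k)\to\calH_k^1$ that is compatible with $f$ under base change. Since $\Sel^\phi(X/k)\subset\Cov_0^\phi(X/k)$ was already noted, the corollary reduces to showing that the global bijection $\mathfrak{f}$ matches up the local solvability conditions cutting out $\Sel^\phi(X/k)$ and $\Sel_{alg}^\phi(X/k)$. Injectivity of the restricted map is automatic from the bijectivity of $\mathfrak{f}$, so only two directions remain.

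For the forward inclusion, suppose $(Y,\pi)\in\Sel^\phi(X/k)$. At each place $v$ pick $Q_v\in Y(k_v)$ and set $P_v=\pi(Q_v)\in X(k_v)$. Applying the characterizing property from Proposition \ref{DescentOnX} with $K=k_v$ and the point $Q_v$ gives
\[
\res_v\bigl(\mathfrak{f}(Y,\pi)\bigr)=\mathfrak{f}_{k_v}(Y_{k_v},\pi_{k_v})=f(P_v)\in f(X(k_v)),
\]
so $\mathfrak{f}(Y,\pi)\in\Sel_{alg}^\phi(X/k)$.

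For the reverse inclusion let $\delta\in\Sel_{alg}^\phi(X/k)$ and set $(Y,\pi)=\mathfrak{f}^{-1}(\delta)\in\Cov_0^\phi(X/k)$; I must show $Y(k_v)\neq\emptyset$ for every $v$. Fix $v$ and choose $P_v\in X(k_v)$ with $f(P_v)=\res_v(\delta)$, available by the definition of $\Sel_{alg}^\phi(X/k)$. The key step is to exhibit a second $\phi$-covering $(Y',\pi')\in\Cov_0^\phi(X_{k_v}/k_v)$ that carries a $k_v$-point $Q'$ with $\pi'(Q')=P_v$. To produce it, note that $[P_v]\in\calX(k_v)$ trivializes the $J_{k_v}$-torsor $\calX_{k_v}$, so over $k_v$ the distinguished $\phi$-covering $\phi:J_{k_v}\to J_{k_v}\simeq\calX_{k_v}$ is defined; pulling back along $X_{k_v}\hookrightarrow\calX_{k_v}$ yields $(Y',\pi')$, and the preimage of $[P_v]$ under $\phi$ contains the identity, which sits over $P_v$. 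Since $(Y',\pi')$ already has a smooth $k_v$-rational point, $\pi'^{*}[\omega_0]$ is represented by a $k_v$-rational divisor, so $(Y',\pi')$ really lies in $\Cov_0^\phi(X_{k_v}/k_v)$. Proposition \ref{DescentOnX} applied to $(Y',\pi')$ and $Q'$ gives $\mathfrak{f}_{k_v}(Y',\pi')=f(P_v)=\res_v(\delta)=\mathfrak{f}_{k_v}(Y_{k_v},\pi_{k_v})$, and the injectivity of the local bijection $\mathfrak{f}_{k_v}$ forces $(Y',\pi')\simeq(Y_{k_v},\pi_{k_v})$, so $Y_{k_v}$ acquires a $k_v$-point.

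The main obstacle is the local existence step: one must know that whenever $X(k_v)\neq\emptyset$ there is at least one member of $\Cov_0^\phi(X_{k_v}/k_v)$ with a $k_v$-rational point over any given $P_v$. Once that is in hand, everything else is a formal transport of local data through the bijection supplied by Proposition \ref{DescentOnX}.
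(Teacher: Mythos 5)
Your argument is correct and is essentially the paper's intended derivation: the paper states this as an immediate consequence of Proposition \ref{DescentOnX}, and the two directions you spell out (transporting local points through the defining property of $\mathfrak{f}$, and, conversely, lifting a local point $P_v$ to a local $\phi$-covering with a $k_v$-point and invoking injectivity of $\mathfrak{f}_{k_v}$) are exactly the steps the paper carries out explicitly for the parallel statement about $\calX$ in Proposition \ref{MainCor}. Your construction of the local covering via the trivialization $\calX_{k_v}\simeq J_{k_v}$ determined by $[P_v]$ is a valid substitute for the paper's appeal to the Kummer sequence, so there is no gap.
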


\begin{proof}[Proof of Proposition \ref{DescentOnX}]
Let $(Y,\pi) \in \Cov_0^\phi(X/k)$. The complete linear system associated to $\pi^*[\omega_0]$ gives an embedding in $\PP^N$ (for some $N$) with the property that for $\omega \in \Omega$, the divisor $\pi^*[\omega]$ is a hyperplane section defined by the vanishing of some linear form $l_\omega$. Recall that $[{\bf w}]$ is the map $\left(\omega \mapsto [\omega]\right) \in \Map_K(\Omega,\Div(X_{\kbar}))$. These linear forms $l_\omega$ may be chosen so as to give a linear form $l$ with coefficients in $L$ defining the $\gal{k}$-equivariant family of divisors $\left(\pi^*[{\bf w}] : \omega \mapsto \pi^*[\omega]\right) \in \Map_K(\Omega,\Div(Y_{\kbar}))$. Since the divisor of $f$ is $\partial[{\bf w}] - \iota(\fm) \in \Map_k(\Omega',\Div(X_\kbar))$, we see that there is some $\Delta \in M^\times$ such that \[ \pi^*f = \Delta\frac{\partial(l)}{\iota(z\circ\pi)} \in \Map_k(\Omega',\kappa(Y_{\kbar})^\times)\,.\] Define $\mathfrak{f}((Y,\pi)) = \Delta$. A different choice of model for $Y$ or a different choice for the linear form $l$ would serve to modify $\Delta$ by an element of $\iota(k^\times)\partial(L^\times)$. So the class of $\Delta$ in $\calH^1_k$ is well defined. For any point $Q \in Y(K)$ not lying above a Weierstrass point or some point at above $\infty$ on $X$, the defining property stated in the proposition is immediate. For the finitely many remaining points the result follows by application of the moving lemma.

Given $(\delta,s) \in L^\times \times k^\times$ representing an element of $\calH^1_k$ one can construct a $\phi$-covering of $X$ as follows. Let $\PP_\Omega$ be the projective space with coordinates parameterized by $\Omega$. Define a curve $Y_{\delta,s} \subset \PP_\Omega \times X$ by declaring that $\left( (u_\omega)_{\omega \in \Omega}, (x:y:z) \right) \in Y_{\delta,s}$ if and only if there exists some $a \in k^\times$ such that 
\begin{align}
\label{eqdef}
\delta(\omega)u_\omega^p = a(x-x(\omega)z) \text{, for all $\omega \in \Omega$, and } s\prod_{\omega}u_\omega^{n_\omega}  = a^dy\,.
\end{align}
Recall that $\delta \in L$ can be interpreted as a map $\delta:\Omega \to \kbar$ and that $n_\omega$ denotes the weight associated to $\omega$ in the weighted norm map $N : L \to k$. Projection onto the second factor gives $Y_{\delta,s}$ the structure of an $X$-torsor under $J[\phi]$. It is easy to see that the isomorphism class of $Y_\delta \to X$ depends only on the class of $(\delta,s)$ in $\calH^1_k$. Suppose $(Y,\pi) \in \Cov_0^\phi(X/k)$ and $\mathfrak{f}(Y,\pi) = (\epsilon,t)$. Then (with notation as above) we can find a projective embedding $Y \to \PP^N$ and linear forms $l_\omega$ which cut out the divisors $\pi^*[\omega]$. The rational map $\PP^N \to \PP_\Omega$ given by $(l_\omega)_{\omega \in \Omega}$ gives an isomorphism (of $X$-schemes) $Y \to Y_{\epsilon,t}$. This shows that $Y_{\delta,s}$ constructed are $\phi$-coverings.  It is evident from the construction that the pull back of any ramification point $\omega \in X$ is the hyperplane section of $Y_{\delta,s}$ cut out by $u_\omega = 0$. So this covering represents an element of $\Cov_0^\phi(X/k)$. Moreover, it is clear that the image of $(Y_\delta,\pi_{\delta,s})$ under $\mathfrak{f}$ is represented by $(\delta,s)$. This shows that $\mathfrak{f}$ is surjective.

Now we show that the map is affine with respect to the action of $\calH_k^0 \simeq (\partial(\Lbar^\times))^{\gal{k}}/\iota(k^\times)\partial(L^\times)$. For this suppose $\alpha \in \Lbar^\times$ with $\partial \alpha = (\alpha^p,N(\alpha)) \in (\partial(\Lbar^\times))^{\gal{k}}$. Multiplication by $\alpha$ induces a $\kbar$-automorphism of $\PP_\Omega$. It is evident from (\ref{eqdef}) that this induces an isomorphism of $X$-schemes $\alpha: (Y_{\alpha^p\delta,N(\alpha)s},\pi_{\alpha^p\delta,N(\alpha)s}) \To (Y_{\delta,s},\pi_{\delta,s})$. The cocycle corresponding to this twist is $\xi: \gal{k}\ni \sigma \mapsto \alpha^\sigma\circ\alpha^{-1} \in \Aut((Y_\delta,\pi_{\delta,s})) \simeq J[\phi]$. Under the isomorphism $(\partial(\Lbar^\times))^{\gal{k}}/k^\times \partial(L^\times) \simeq \ker(\Upsilon) \subset \HH^1(k,J_\fm[\phi])$ in Proposition \ref{kerUps1}, the class of $\partial\alpha$ corresponds to the class of the cocycle $\eta: \gal{k} \ni \sigma \mapsto q(\alpha^\sigma/\alpha) \in J[\phi]$, where $q:J_\fm[\phi] \to J[\phi]$ is the quotient map in (\ref{phitorsion}). It is then clear that $\xi$ and $\eta$ give the same class in $\HH^1(k,J[\phi])$. This proves that $\mathfrak{f}$ is affine.
\end{proof}

\section{A descent map for coverings of $\calX$}
We consider the subset $\Cov_{good}^\phi(\calX/k) \subset \Cov^\phi(\calX/k)$ consisting of $\phi$-coverings of $\calX$ such that the corresponding $\phi$-covering of $X$ lies in $\Cov_0^\phi(X/k)$, and define a map \[\mathfrak{F}:\Cov_{good}^\phi(\calX/k) \to \Cov_0^\phi(X/k) \stackrel{\mathfrak{f}}\To \calH^1_k\,.\] Proposition \ref{DescentOnX} implies that $\mathfrak{F}$ is an affine isomorphism.

\begin{Lemma}
\label{pointsaregood}
Suppose $X(k) \ne \emptyset$.
\begin{enumerate}
\item If $(\calY,\pi) \in \Cov^\phi(\calX/k)$ and $\calY(k) \ne \emptyset$, then $(\calY,\pi) \in \Cov_{good}^\phi(\calX/k)$.
\item If $(\calY,\pi) \in \Cov_{good}^\phi(\calX/k)$ and $Q \in \calY(K)$ for some extension $K/k$, then \[ f(\pi(Q)) = \mathfrak{F}((\calY,\pi)) \text{ in $\calH_K^1$}\,.\]
\end{enumerate}

\end{Lemma}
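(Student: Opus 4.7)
My plan is to prove Part~(1) directly by identifying $(\calY,\pi)$ explicitly with a $\phi$-covering already known to lie in $\Cov_{good}^\phi(\calX/k)$, and then deduce Part~(2) from Part~(1) by base change to $K$. The strategy rests on the fact that $\mathfrak{F}$ is an affine isomorphism (by Proposition~\ref{DescentOnX}), so if I can identify $(\calY,\pi)$ with an element built explicitly from a $k$-rational divisor on $X$, I obtain simultaneously that it lies in $\Cov_{good}^\phi(\calX/k)$ and a formula for its image under $\mathfrak{F}$.

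Fix $Q_0 \in \calY(k)$. Since $[\calX] = \phi[\calY]$ is trivial in $\HH^1(k,J)$, we have $\calX(k) \neq \emptyset$, and since $X(k) \neq \emptyset$ forces $\theta_X \equiv 0$, the class $\pi(Q_0) \in \calX(k)$ is represented by a $k$-rational divisor $D_0 \in \Div^1(X)$. Using $Q_0$ and $[D_0]$ to trivialize the $J$-torsors $\calY$ and $\calX$, the map $\pi$ takes the canonical form $R \mapsto \phi(R)+[D_0]$. On the other hand, $f(D_0) \in \calH^1_k$ and the construction \eqref{eqdef} yields $(Y_{f(D_0)}, \pi_{f(D_0)}) \in \Cov_0^\phi(X/k)$, corresponding via the affine isomorphism $\Cov^\phi(X/k) \simeq \Cov^\phi(\calX/k)$ to some $(\calY',\pi') \in \Cov_{good}^\phi(\calX/k)$ with $\mathfrak{F}((\calY',\pi')) = f(D_0)$. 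I would then show that $(\calY',\pi')$ is also $k$-isomorphic to $(J, R \mapsto \phi(R)+[D_0])$, whence $(\calY,\pi) \cong (\calY',\pi') \in \Cov_{good}^\phi(\calX/k)$ with $\mathfrak{F}((\calY,\pi)) = f(D_0)$. For Part~(2), apply this argument over $K$ using $Q$ in the role of $Q_0$ and a $K$-rational divisor $D_K$ representing $\pi(Q) \in \calX(K)$; by compatibility of $\mathfrak{F}$ with base change and the identity $f(D_K) = f(\pi(Q))$ that follows from the definition of $f$ on $\calX(K)$, the conclusion $\mathfrak{F}((\calY,\pi)) = f(\pi(Q))$ in $\calH^1_K$ follows.

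The main obstacle is verifying that $(\calY',\pi')$ has the canonical form above, i.e., that $\calY'$ possesses a $k$-rational point with image $[D_0] \in \calX(k)$. I would address this by computing $\mathfrak{f}$ applied to the canonical pullback and matching it with $f(D_0)$: applying the defining property of $\mathfrak{f}$ from Proposition~\ref{DescentOnX} to a point $(P,R)$ of this pullback over some extension $K'/k$, we obtain $f(P) = \mathfrak{f}((Y_{f(D_0)},\pi_{f(D_0)}))$ in $\calH^1_{K'}$. The relation $[P] - [D_0] = \phi(R)$ combined with the Kummer-theoretic fact that $f\circ\phi$ vanishes modulo $\iota(K'^\times)\partial(L_{K'}^\times)$ on $J(K')$ (since $\phi(R)$ lifts to the $K'$-rational point $R \in J$) gives $f(D_0) = \mathfrak{f}((Y_{f(D_0)},\pi_{f(D_0)}))$ in $\calH^1_{K'}$. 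The descent from $\calH^1_{K'}$ to $\calH^1_k$ follows by choosing representatives carefully: both sides admit $k$-rational representatives in $M^\times$ whose ratio, being Galois-invariant and visibly expressible via $\partial(l(\tilde{Q}))/\iota(z\circ\pi_Y(\tilde{Q}))$ for appropriate Galois-equivariant data, lies in $\iota(k^\times)\partial(L^\times)$.
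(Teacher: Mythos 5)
There is a genuine gap at exactly the step you flag as the main obstacle, and it occurs in two forms. First, a circularity: to ``compute $\mathfrak{f}$ applied to the canonical pullback'' you must already know that this pullback lies in $\Cov_0^\phi(X/k)$ (i.e.\ that the pullback of $[\omega_0]$ is linearly equivalent to a $k$-rational divisor), since $\mathfrak{f}$ is only defined on $\Cov_0^\phi(X/k)$ --- but that is precisely the content of part (1) for the covering $(\calY,\pi)\cong(J,\,R\mapsto\phi(R)+[D_0])$ that you are trying to place in $\Cov_{good}^\phi(\calX/k)$. Second, and more seriously, the ``descent from $\calH^1_{K'}$ to $\calH^1_k$'' does not work as described. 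The restriction $\calH^1_k\to\calH^1_{K'}$ is affine over $\calH^0_k\simeq\ker\Upsilon\to\calH^0_{K'}$, so its fibers are governed by $\ker\bigl(\HH^1(k,J[\phi])\to\HH^1(K',J[\phi])\bigr)$, which is nonzero in general; the discrepancy between $\mathfrak{f}$ of the pullback and $f(D_0)$ is exactly the twist relating the pullback to $Y_{f(D_0)}$, so exhibiting a $K'$-point only shows that this twist dies after restriction to $K'$, not that it vanishes. Galois-invariance of the ratio of representatives in $M^\times$ places it only in $(\partial(\Lbar^\times))^{\gal{k}}$ (up to $\iota$), and the nontriviality of $(\partial(\Lbar^\times))^{\gal{k}}/\iota(k^\times)\partial(L^\times)=\calH^0_k$ is the whole point of the construction. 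Nor can $K'$ generally be chosen of degree prime to $p$ (the fiber of the pullback over a point of $X(k)$ is a torsor under $J[\phi]$, all of whose closed points may have degree divisible by $p$), so restriction--corestriction is unavailable.

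The paper's proof avoids both problems by using the hypothesis $X(k)\neq\emptyset$ more strongly than you do: instead of an arbitrary class $[D_0]=\pi(Q_0)\in\calX(k)$, it takes an honest point $R\in X(k)$, lifts it to a $k$-rational point $R'$ on a $\phi$-covering $(Y,\pi)$ of $X$ (which is then automatically in $\Cov_0^\phi(X/k)$, since $Y(k)\neq\emptyset$ kills the Brauer obstruction on $Y$), and evaluates $\mathfrak{f}((Y,\pi))=f([R])$ directly over $k$ by the defining property of $\mathfrak{f}$ --- no auxiliary extension and no descent step. Part (1) then follows because the set of $\phi$-coverings of $\calX$ with a $k$-point is a coset of the image of $J(k)$ under the connecting homomorphism, which lands in $\ker\Upsilon$, while $\Cov_{good}^\phi(\calX/k)$ is a coset of $\ker\Upsilon$ containing one such covering; part (2) for general $D\in\Pic^1(X)$ follows from affineness of $\mathfrak{F}$ and of the lifting map $d$ with respect to $f|_{\Pic^0(X)}$, identified with the connecting homomorphism. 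To salvage your argument you should likewise anchor the computation at a point of $X(k)$ and propagate by the affine structure rather than by base change.
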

\begin{proof}
By assumption there is some point $R \in X(k) \ne \emptyset$. Then there exists $(Y,\pi) \in \Cov_0^\phi(X/k)$ and $R' \in Y(k)$ such that $\pi(R') = R$. Let $(\calY,\tilde{\pi}) \in \Cov_{good}^\phi(\calX/k)$ be the corresponding covering and $i_Y:Y \to \calY$ the base change of $i_X:X \to \calX$. Clearly $i_Y(R') \in \calY(k) \ne \emptyset$. 

The set $B$ of isomorphism classes of $\phi$-coverings of $\calX$ which contain a $k$-rational point is a principal homogeneous space for the image of $J(k)$ under the connecting homomorphism in the Kummer sequence (\ref{elsKummer}). This image is contained in $\ker(\Upsilon)$. So $B \subset (\calY,\pi)\cdot \ker(\Upsilon) =  \Cov_{good}^\phi(\calX/k)$. This proves (1).

For (2) consider the map $d:\Pic^1(X) \to \Cov^\phi(\calX/k)$ sending a point $P \in \Pic^1(X) = \calX(k)$ to the unique covering to which $P$ lifts. This map is affine, since $f : \Pic^0(X) \to \calH_k^0 \simeq \ker(\Upsilon) \subset \HH^1(k,J[\phi])$ can be identified with the connecting homomorphism in the Kummer sequence \cite[Theorem 1.1]{Unfake}. Moreover its image lands in $\Cov_{good}^\phi(\calX/k)$ by part (1).

It suffices to prove the statement for $K = k$, which amounts to showing that $f(D) = \mathfrak{F}(d(D))$ for every $D \in \Pic^1(X)$. The point $i_Y(R') \in \calY(k)$ is a lift of $[R] \in \calX(k)$, so $\calY = d([R])$. From the definition of $\mathfrak{F}$ and the defining property of $\mathfrak{f}$ we have
\[ \mathfrak{F}((\calY,\tilde{\pi})) = \mathfrak{f}((Y,\pi)) = f([\pi(R')]) = f([R]) \in \calH_k^1\,.\] Hence $f([R]) = \mathfrak{F}(d([R]))$. 

Now suppose $D \in \Pic^1(X)$. Since $d$ is affine, $d(D)$ is the twist of $d([R])$ by the cocyle $f(D-[R]) \in \calH_k^0 \simeq \ker(\Upsilon)$. Since $\mathfrak{F}$ is affine \[\mathfrak{F}(d(D)) = \mathfrak{F}(d[R])\cdot f(D-[R]) = f(D)\mathfrak{F}(d[R])/f([R])=f(D)\,.\] This completes the proof.
\end{proof}

We have the following analogue of Corollary \ref{Xselmer} which, together with Proposition \ref{ShaDiv}, implies Theorem \ref{mainthm}. 

\begin{Proposition}
\label{MainCor}
Suppose $k$ is a global field and $X$ is everywhere locally solvable. Then $\mathfrak{F}$ restricts to an affine isomorphism $\Sel^\phi(\calX/k) \to \Sel^\phi_{alg}(\calX/k)$.
\end{Proposition}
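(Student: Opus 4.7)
The plan is to verify that the affine isomorphism $\mathfrak{F}:\Cov^\phi_{good}(\calX/k)\to\calH^1_k$ established just before the statement restricts to a bijection $\Sel^\phi(\calX/k)\to\Sel^\phi_{alg}(\calX/k)$; affineness of the restriction is then automatic. Three steps are required: (i) $\Sel^\phi(\calX/k)\subseteq\Cov^\phi_{good}(\calX/k)$, so that $\mathfrak{F}$ is defined on it, (ii) $\mathfrak{F}$ maps $\Sel^\phi(\calX/k)$ into $\Sel^\phi_{alg}(\calX/k)$, and (iii) $\mathfrak{F}^{-1}$ carries $\Sel^\phi_{alg}(\calX/k)$ back into $\Sel^\phi(\calX/k)$. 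Throughout I will use that both $\mathfrak{F}$ and the map $d$ appearing in the proof of Lemma \ref{pointsaregood}(2) are constructed in a manner compatible with base change to any completion $k_v$.

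For (i), let $(\calY,\pi)\in\Sel^\phi(\calX/k)$ and let $(Y,\tilde\pi)$ denote its pullback along $X\to\calX$. The class $\tilde\pi^*[\omega_0]\in\Pic(Y_\kbar)^{\gal{k}}$ has a Brauer obstruction $\theta_Y(\tilde\pi^*[\omega_0])\in\Br(k)$ coming from the analogue of (\ref{thetaX}) for $Y$, vanishing precisely when the class is represented by a $k$-rational divisor on $Y$. For each prime $v$ one has $X(k_v)\ne\emptyset$ and $\calY(k_v)\ne\emptyset$, so Lemma \ref{pointsaregood}(1) applied over $k_v$ places $(\calY_{k_v},\pi_{k_v})$ in $\Cov^\phi_{good}(\calX_{k_v}/k_v)$; equivalently, the image of $\theta_Y(\tilde\pi^*[\omega_0])$ in $\Br(k_v)$ is trivial for every $v$. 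The Hasse principle for the Brauer group of a global field then forces $\theta_Y(\tilde\pi^*[\omega_0])=0$ in $\Br(k)$, so $(\calY,\pi)\in\Cov^\phi_{good}(\calX/k)$.

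For (ii), given $(\calY,\pi)\in\Sel^\phi(\calX/k)$, choose $Q_v\in\calY(k_v)$ for each $v$. By (i) the base change $(\calY_{k_v},\pi_{k_v})$ lies in $\Cov^\phi_{good}(\calX_{k_v}/k_v)$, so Lemma \ref{pointsaregood}(2) yields $\res_v(\mathfrak{F}(\calY,\pi))=\mathfrak{F}(\calY_{k_v},\pi_{k_v})=f(\pi(Q_v))$; since $X(k_v)\ne\emptyset$ kills the Brauer obstruction in (\ref{thetaX}) locally, $\pi(Q_v)\in\calX(k_v)=\Pic^1(X_{k_v})$, whence $\res_v(\mathfrak{F}(\calY,\pi))\in f(\Pic^1(X_{k_v}))$. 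For (iii), given $\delta\in\Sel^\phi_{alg}(\calX/k)$, set $(\calY,\pi)=\mathfrak{F}^{-1}(\delta)\in\Cov^\phi_{good}(\calX/k)$ and for each $v$ fix $D_v\in\Pic^1(X_{k_v})$ with $f(D_v)=\res_v(\delta)$. Let $(\calY'_v,\pi'_v)=d_v(D_v)\in\Cov^\phi(\calX_{k_v}/k_v)$ denote the unique local $\phi$-covering containing a $k_v$-rational point above $D_v$. Lemma \ref{pointsaregood}(1) places this covering in $\Cov^\phi_{good}(\calX_{k_v}/k_v)$, and Lemma \ref{pointsaregood}(2) gives $\mathfrak{F}(\calY'_v,\pi'_v)=f(D_v)=\res_v(\delta)=\mathfrak{F}(\calY_{k_v},\pi_{k_v})$. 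Injectivity of $\mathfrak{F}$ over $k_v$ then identifies $(\calY'_v,\pi'_v)$ with $(\calY_{k_v},\pi_{k_v})$, so $\calY(k_v)\ne\emptyset$.

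The main obstacle is step (i): the hypothesis supplies local points on $\calY$ but not on $Y$, so one cannot directly produce a $k$-rational divisor representing $\tilde\pi^*[\omega_0]$. Routing through the Brauer obstruction and invoking the Hasse principle for $\Br(k)$ circumvents this. The remaining steps are essentially bookkeeping, made possible by the base-change compatibility of $\mathfrak{F}$ and the two parts of Lemma \ref{pointsaregood}, applied over each completion $k_v$.
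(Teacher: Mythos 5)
Your proposal is correct and follows essentially the same route as the paper's proof: the same three steps (containment in $\Cov^\phi_{good}(\calX/k)$ via the local-global principle for $\Br(k)$ combined with Lemma \ref{pointsaregood}(1), forward inclusion via Lemma \ref{pointsaregood}(2) applied over each $k_v$, and surjectivity by matching $\mathfrak{F}^{-1}(\delta)\otimes k_v$ with the local covering above a chosen $D_v$ using the injectivity of $\mathfrak{F}$ over $k_v$). The only differences are cosmetic, e.g.\ your explicit remark that $X(k_v)\ne\emptyset$ forces $\calX(k_v)=\Pic^1(X_{k_v})$, which the paper leaves implicit.
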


\begin{proof}
First off, let us show that $\Sel^\phi(\calX/k) \subset \Cov_{good}^\phi(\calX/k)$. Suppose that $(\calY,\pi) \in \Sel^\phi(\calX/k)$ and that $X$ is everywhere locally solvable. Consider the covering $\tilde{\pi}:Y \to X$ obtained by pulling back. We want to show that the pull back to $Y$ of some ramification point on $X$ is linearly equivalent to a $k$-rational divisor. The obstruction to a $k$-rational divisor class being represented by a $k$-rational divisor is an element of the Brauer group of $k$. Since the Brauer group of a global field satisfies the local-global principle it suffices to show that $(Y,\tilde{\pi})$ gives a class in $\Cov_0^\phi(X/k_v)$ for every prime $v$. This follows from Lemma \ref{pointsaregood}(1) since we have assumed both $X$ and $\calY$ are everywhere locally solvable.

Now let us show that $\mathfrak{F}$ maps the $\phi$-Selmer set to the algebraic $\phi$-Selmer set. Let $(\calY,\pi) \in \Sel^\phi(\calX/k)$ and set $\delta = \mathfrak{F}((\calY,\pi))$. For every completion $k_v$ of $k$, $X(k_v) \ne \emptyset$, so we may apply Lemma \ref{pointsaregood}(2) over $k_v$. This shows that $\res_v(\delta) \in f(\Pic^1(X_{k_v}))$ for every $v$. Consequently, $\delta$ lies in the algebraic $\phi$-Selmer set. 

It now suffices to show that the map in the statement is surjective, as it is the restriction of an affine isomorphism. For this let $\delta$ be an element in the algebraic $\phi$-Selmer set. Then $\delta \in \calH_k^1$, so $\delta = \mathfrak{F}((\calY,\pi))$ for some $(\calY,\pi) \in \Cov_{good}^\phi(\calX/k)$. We need to show that $\calY$ is everywhere locally solvable. For each prime $v$ we can find $P_v \in \Pic^1(X_{k_v}) \subset \calX(k_v)$ such that $\res_v(\delta) = f(P_v)$. The point $P_v$ lifts to a $k_v$-point on some $\phi$-covering $(\calY_v,\pi_v)$ defined over $k_v$. Moreover $(\calY_v,\pi_v) \in \Cov_{good}^\phi(\calX/k_v)$ by Lemma \ref{pointsaregood}(1) and $\mathfrak{F}((\calY_v,\pi_v)) = \res_v(\delta)$ by Lemma \ref{pointsaregood}(2). Since $\mathfrak{F}$ is injective we have that $\calY \otimes k_v$ and $\calY_v$ are isomorphic, for each prime $v$. This implies that $\calY$ is everywhere locally solvable as required.
\end{proof}

\section{Examples}

We have implemented the algorithm described in Section \ref{COMPU} in the computer algebra system {\tt Magma} \cite{magma} for degree $p$ cyclic covers of $\PP^1$ defined over the $p$-th cyclotomic field. As a test of the algorithm (and the correctness of the implementation) we performed computations for a large sample of hyperelliptic curves. When at all possible we checked our results for consistency with rank bounds obtained by other means (e.g. different implementations of descent on elliptic curves and Jacobians of hyperelliptic curves, points of small height the Jacobian, information obtained assuming standard conjectures, etc.). Some of the resulting data is presented at the end of this section. In addition to this we offer the following examples.

\begin{Example}
The Jacobians of the hyperelliptic curves over $\Q$,
\begin{align*}
X_1 &: y^2 + (x^3+x + 1)y = x^6+5x^5+12x^4 + 12x^3+6x^2-3x-4\\
X_2 &: y^2 + (x^3+x + 1)y = -2x^6+7x^5-2x^4 -19x^3 +2x^2 +18x +7\,,
\end{align*}
have Mordell--Weil rank $0$ and the $2$-primary parts of their Shafarevich--Tate groups are isomorphic to $\Z/2\times\Z/2$.
\end{Example}

\begin{proof}
Let $J_i$ denote the Jacobian of $X_i$ and $\calX_i$ denote $\PIC^1(X_i)$. The $X_i$ are everywhere locally solvable double covers of $\PP^1$.
Using {\tt Magma} we computed that $\Sel^2(J_i/\Q)$ has $\F_2$-dimension $2$ and that the $2$-Selmer set of $\PIC^1(X_i)$ is empty for $i = 1,2$. The result then follows from Theorem \ref{mainthm} and its corollary. 
\end{proof}

\begin{Remark}
These curves were taken from \cite{Evidence} (where they were labeled $C_{125,B}$ and $C_{133,A}$) where it is shown that the order of the $2$-torsion subgroup of $\Sha$ is equal to the order of $\Sha$ predicted by the Birch and Swinnerton-Dyer conjectural formula for several modular Jacobian surfaces. In particular, they proved that the formula holds for those Jacobians considered if and only if $2\Sha = 0$. For the curves considered one can determine the rank (unconditionally) by analytic means, so a $2$-descent on the Jacobian determines $\Sha[2]$, but it only determines $\Sha(2)$ when $\dim_{\F_2}\Sha[2] \le 1$. Apart from the two curves above, all curves considered in \cite{Evidence} had $\dim_{\F_2}\Sha[2] \le 1$. So from the example above one can now conclude for the curves considered in \cite{Evidence} that the conjectural formula holds if and only if $\Sha$ has no elements of odd order.
\end{Remark}

\begin{Example} Let $X/\Q$ be the genus $4$ cyclic cover of $\PP^1$ with affine equation \[X:y^3= 3(x^6+x^4+4x^3+2x^2+4x+3)\,.\] Then $X$ is everywhere locally solvable, yet has no $\Q$-rational divisors of any degree prime to $3$. Moreover, the Jacobian $J$ of $X$ has Mordell--Weil rank $1$ and the $3$-primary part of its Shafarevich-Tate group is isomorphic to $\Z/3\times\Z/3$.
\end{Example}

\begin{proof}
We first note that $X$ is everywhere locally solvable. In order to apply the results of this paper, we work over the field $k = \Q(\zeta_3)$ obtained by adjoining a primitive cube root of unity $\zeta_3$. To prove the result we do $\phi = 1 - \zeta_3$ descents on $J_k$ and $\PIC^1(X_k)$. Using {\tt Magma} we computed that the $\phi$-Selmer group of $J_k$ has $\F_3$-dimension $3$. From The exact sequence (\ref{elsKummer}) it follows that 
\[ \dim_{\F_3}\frac{J(k)}{\phi J(k)} + \dim_{\F_3}\Sha(J/k)[\phi] = 3\,.\] We then computed $\Sel_{alg}^\phi(\PIC^1(X_k)/k)$ and found it to be empty. Using Corollary \ref{Cortomainthm} this lowers the upper bound for the dimension of $J(k)/\phi J(k)$ to $1$.

The divisor on $\PP^1$ defined by $x^3-x^2+4x+4 = 0$ lifts to a degree $3$ $\Q$-rational divisor $D$ on $X$. One can check that the image of the class of $D - \fm$ under $f:\Pic^0(X_k) \to \calH^0_k$ is nontrivial. So we find that $J(k)/\phi J(k)$ has dimension $1$. This gives an upper bound of $2$ for the dimension of $\Sha(J/k)[\phi]$, so by Corollary \ref{Cortomainthm}, $\Sha(J/k)(3) \simeq \Sha(J/k)[\phi]\simeq\Z/3\times\Z/3$. On the other hand, $\PIC^1(X)$ represents an element of $\Sha(J/\Q)[3]$ which is not divisible by $3$ (since it is not divisible by $3$ over $k$). On the other hand the dimension of $\Sha(J/\Q)[3]$ is even \cite{PoonenStoll}, so it is at least $2$. Now the map $\Sha(J/\Q)(3) \to \Sha(J/k)(3)$ obtained by extension of scalars is injective since $[k:\Q] = 2$ is prime to $3$, so we must have $\Sha(J/\Q)(3) \simeq \Z/3\times \Z/3$.

It remains to compute the rank. Galois acts on the ramification points as the full symmetric group, from which it follows that there is no nontrivial $k$-rational $\phi$-torsion in $J(k)$. By \cite[Cor. 3.7 and Prop. 3.8]{SchJAC} it follows that 
\begin{align*}
\text{rank}(J(k)) &= [k:\Q]\cdot \left(\dim\frac{J(k)}{\phi J(k)} - \dim J(k)[\phi]\right) = 2\,, \text{  and}\\
\text{rank}(J(\Q)) &= \frac{\text{rank}(J(k))}{[k:\Q]} = 1 \,.
\end{align*}
In fact, $D-\fm$ represents a point of infinite order in $J(\Q)$. \end{proof}

\begin{Remark}
With only a $\phi$-descent on $J$, one is only able to conclude that $1 \le \dim_{F_3}J(k)/\phi J(k) \le 3$, giving $1 \le \text{rank}(J(\Q)) \le 3$.
\end{Remark}

\begin{Example}[{\bf Data for hyperelliptic curves}]
For $g \in \{ 2,3,4 \}$ we tested our algorithm on various samples of hyperelliptic curves of genus $g$. For varying values of $N$ we randomly chose $10,000$ separable polynomials $h(x) = \sum_{i = 1}^{2g+2} h_ix^i$ with integers $h_i$ bounded in absolute value by $N$ and $(h_{2g+2},h_{2g+1}) \ne (0,0)$, and considered the genus $g$ curves $X$ defined by $y^2 = h(x)$. For each curve considered we computed $\Sel_{alg}^2(J/\Q)$ and $\Sel_{alg}^2(\calX/\Q)$ (assuming the generalized Riemann hypothesis for reasons of efficiency). If the latter was empty, we noted whether or not this was because $\Pic^1(X_{\Q_p})$ was empty for some prime $p \le \infty$. The resulting data is summarized in the table below. The bold faced entries correspond to curves where our algorithm provided information that would have not otherwise been obtained.

It is also interesting to consider how often the combined information yields a sharp upper bound for the Mordell--Weil rank. This will be the case if (i) $\calX$ is either trivial or not divisible by $2$ in $\Sha(J/\Q)$; (ii) The number of primes where $X$ fails to have divisors of degree $1$ locally is at most one (resp. not even and positive when the genus is even); and (iii) $\Sha(J/\Q)[2]$ contains at most two elements linearly independent from $\calX$. The assumptions (i) and (ii) imply, respectively, that in order for $\calX(\Q)$ to be empty it is necessary and sufficient that $\Sel_{alg}^2(\calX/\Q)$ be empty, while (iii) guarantees that determining if $\calX(\Q)$ is empty is sufficient to deduce a sharp bound. 

With this in mind we used a point search to compute a lower bound for the rank for each curve, both with and without assuming that the divisible subgroup of $\Sha(J/\Q)$ is trivial (the assumption allows us to determine the parity of the rank). When this matched the upper bound it means we computed the rank, and in such cases we counted (parenthetically) the number of curves where the additional information provided by $\Sel^2_{alg}(\calX/\Q)$ was needed. We then computed the proportion of curves for which the rank could be determined with the additional information provided by our algorithm among those for which the rank could not be determined by descent on the Jacobian alone. 

For example, in the sample of genus $2$ curves with  $N = 10$ the method yielded new information for ca. 17\% of the curves, which (assuming $\Sha_{div}=0$) increased our success rate from ca. $76\%$ to ca. $93\%$, handling ca. 73\% of the curves left previously undecided by the descent on the Jacobian.

\begin{table}[thb]
\begin{tabular}{|r||r|r||r|r||r|r|r|} \hline
&\multicolumn{2}{c|}{$\Sel^2(\calX/\Q)=\emptyset$}&\multicolumn{2}{c|}{Rank computed}& \multicolumn{2}{c|}{$\frac{\text{newly determined}}{\text{previously undetermined}}$}\\ \hline

$(g,N)$&$\Pic^1(\Q_p) = \emptyset$ && assuming GRH & and $\Sha_{div} = 0$ & GRH &$\Sha_{div} = 0$  \\
\hline
(2,5) & 1165 & {\bf 981} & 7873 ({\bf 848}) & 9819 ({\bf 977}) & 29\% & 84\%\\
(2,10)&1310 & {\bf 1778} & 5315 ({\bf 1295}) & 9346 ({\bf 1752}) & 22\% & 73\%\\
(2,20)&1367 & {\bf 2420} & 3411 ({\bf 1350}) & 8392 ({\bf 2317}) & 17\% & 59\% \\
(2,50)& 1381 & {\bf 2916} & 2156 ({\bf 1350}) & 6955 ({\bf 2637}) & 15\% & 46\% \\\hline
(3,5)& 873 & {\bf 1228} & 2540 ({\bf 645}) & 7573 ({\bf 1164})  & 8\% & 32\% \\
(3,10)&944 & {\bf 1857} & 1477 ({\bf 786}) & 5840 ({\bf 1619}) & 8\% & 29\%\\\hline
(4,5)& 713 & {\bf 1278} & 1717 ({\bf 484}) & 6031 ({\bf 1127})& 6\% & 22\% \\
(4,10) & 735 & {\bf 1952} & 1296 ({\bf 726}) & 5145 ({\bf 1644}) & 8\% & 25\%\\\hline
\end{tabular}
\end{table}

\end{Example}
\vfill
\pagebreak

\end{document}